\documentclass[reqno]{amsart}
\usepackage{mathrsfs}
\usepackage{color}
\usepackage{amsmath}
\usepackage{amsfonts}
\usepackage{amssymb}
\usepackage{graphicx}%
\usepackage{hyperref}


 \newtheorem{Theorem}{Theorem}[section]
 \newtheorem{Corollary}[Theorem]{Corollary}
 
 \newtheorem{Proposition}[Theorem]{Proposition}

 \newtheorem{Remark}[Theorem]{Remark}

 \numberwithin{equation}{section}



\begin{document}

\title[multiplier ideal sheaves, complex singularity exponents]
 {multiplier ideal sheaves, complex singularity exponents, and restriction formula}

\author{Qi'an Guan}
\address{Qi'an Guan:  School of Mathematical Sciences, and Beijing International Center for Mathematical Research,
Peking University, Beijing, 100871, China.}
\email{guanqian@math.pku.edu.cn}

\thanks{The author was partially supported by NSFC-11431013 and NSFC-11522101}

\subjclass[2010]{}

\keywords{}

\date{\today}

\dedicatory{}

\commby{}


\begin{abstract}
In this article,
we obtain two sharp equality conditions in the restriction formula on complex singularity exponents:
an equality between the codimension of the zero variety of related multiplier ideal sheaves and
the relative codimension of the restriction of the variety on the submanifold (in the restriction formula);
an equivalence between the transversality (between the variety
and the submanifold) and the regularity of the restriction of the variety.
As applications,
we present sharp equality conditions in
the fundamental subadditivity property on complex singularity exponents.
\end{abstract}

\maketitle

\section{background, main results and applications}\label{sec:background}

Complex singularity exponent (or log canonical threshold (lct) in algebraic geometry)
is an important concept related to
the multiplier ideal sheaves in complex geometry and complex algebraic geometry.
Several important and fundamental related results have been established, e.g.
the restriction formula and the subadditivity property on complex singularity exponents
(see \cite{D-K01} \cite{demailly2010}).

Recently,
by giving lower bounds of the dimensions of the zero varieties of related multiplier ideal sheaves,
sharp equality conditions in the restriction formula and the subadditivity property
have been established \cite{GZjump-equ}.
In the present article,
continuing the above work,
we obtain two sharp equality conditions:
an equality between the codimension of the zero variety and
the codimension of the restriction of the variety on the submanifold (in the restriction formula);
an equivalence between
the transversality
(between the zero variety and the submanifold)
and the regularity of the restriction of the zero variety.
As applications, we present sharp equality conditions in
the fundamental subadditivity property.

\textbf{Organization:}
We organize the present article as follows:
in Section \ref{sec:background},
we recall some related results of multiplier ideal sheaves and complex singularity exponents,
and present the main results of the present article
(Theorem \ref{thm:jump_equality_dim_sing}, Theorem \ref{thm:lct_slice_graph})
and applications
(Proposition \ref{prop:add_dim_nonregular} and
Proposition \ref{prop:lct_add_graph});
in Section \ref{sec:prepara},
we recall some know results and present some preparatory results;
in Section \ref{sec:proof_main},
we prove the main results and applications.

\subsection{Multiplier ideal sheaves and complex singularity exponents}

Let $\Omega$ be a domain in $\mathbb{C}^{n}$ and $o\in\Omega$.
Let $u$ be a plurisubharmonic function on $\Omega$.
Nadel \cite{Nadel90} introduced
the multiplier ideal sheaf $\mathcal{I}(u)$
which can be defined as the sheaf of germs of holomorphic functions $f$ such that
$|f|^{2}e^{-2u}$ is locally integrable (background see e.g. \cite{siu96,demailly-note2000,siu00,siu05,Lazar04I,Lazar04II,siu09,demailly2010}).
Here $u$ is regarded as the weight of $\mathcal{I}(u)$.
As in \cite{demailly2010}, we denote the zero variety $\{z|\mathcal{I}(u)_{z}\neq \mathcal{O}_{z},z\in\Omega\}$
of $\mathcal{I}(u)$ by $V(\mathcal{I}(u))$.

It is well-known that the multiplier ideal sheaf $\mathcal{I}(u)$ is coherent and integral closed, satisfies Nadel's vanishing theorem \cite{Nadel90},
and the strong openness property $\mathcal{I}(u)=\cup_{\varepsilon>0}\mathcal{I}((1+\varepsilon)u)$ \cite{GZopen-a,GZopen-b,GZopen-c}
i.e. the solution of Demailly's strong openness conjecture
(the background and motivation of the conjecture could be referred to \cite{demailly-note2000,demailly2010},
the proof of $\dim\leq 2$ case could be referred to \cite{JM12,JM13}).

The complex singularity exponent (see \cite{tian87}, see also \cite{demailly-note2000,demailly2010},
lct in algebraic geometry see \cite{Sho92,Ko92}) is defined
$$c_{o}(u):=sup\{c\geq0:\exp{(-2cu)}\,\,\text{is integrable near}\,\,o\}.$$
Berndtsson's solution (\cite{berndtsson13}) of the openness conjecture posed by Demailly-Kollar, i.e. $e^{-2c_{o}(u)u}$ is not integrable near $o$,
implies that $\{z|c_{z}(u)\leq c\}=V(\mathcal{I}(cu))$ is an analytic set.

Let $I\subseteq \mathcal{O}_{o}$ be a coherent ideal.
The jumping number $c_{o}^{I}(u)$ is defined
(see e.g. \cite{JM12,JM13})
$c_{o}^{I}(u):=sup\{c\geq0:|I|^{2}\exp{(-2cu)}\,\,\text{is integrable near}\,\,o\}$.
Let $\mathcal{F}\subseteq\mathcal{O}$ be a coherent ideal sheaf.
In \cite{GZjump-equ},
it has been presented that the strong openness property $\mathcal{I}(u)=\cup_{\varepsilon>0}\mathcal{I}((1+\varepsilon)u)$
implies that $\{z|c_{z}^{\mathcal{F}_{z}}(u)\leq p\}=Supp(\mathcal{F}/(\mathcal{F}\cap\mathcal{I}(pu)))$
is an analytic subset.

\subsection{Main results: Sharp equality conditions in the restriction formula on complex singularity exponents}
Let $u$ be a plurisubharmonic function near $o\in\mathbb{C}^{n}$.
Let $p:\mathbb{C}^{n}\to\mathbb{C}^{n-k}$
satisfying $p(z_{1},\cdots,z_{n})=(z_{k+1},\cdots,z_{n}),$
where $(z_{1},\cdots,z_{n})$ and $(z_{k+1},\cdots,z_{n})$ are coordinates of
$\mathbb{C}^{n}$ and $\mathbb{C}^{n-k}$ respectively.
Let $H=\{z_{k+1}=\cdots=z_{n}=0\}$.
In \cite{D-K01}, the following restriction formula (an "important monotonicity result" as in \cite{D-K01}) on complex singularity exponents
has been presented by using Ohsawa-Takegoshi $L^{2}$ extension theorem.

\begin{Proposition}
\label{prop:DK2000}\cite{D-K01}
$c_{o}(u|_{H})\leq c_{o}(u)$
holds,
where $u|_{H}\not\equiv-\infty$.
\end{Proposition}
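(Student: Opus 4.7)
The plan is to prove Proposition \ref{prop:DK2000} via the Ohsawa--Takegoshi $L^{2}$ extension theorem, as hinted in the excerpt. It suffices to show that for every $c$ with $0\leq c<c_{o}(u|_{H})$ one has $c\leq c_{o}(u)$; taking the supremum over such $c$ then yields the inequality. Reformulated in terms of integrability, one must verify that if $e^{-2cu|_{H}}$ is integrable on some neighborhood of $o$ in $H$, then $e^{-2cu}$ is integrable on some neighborhood of $o$ in $\mathbb{C}^{n}$.

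Fix such a $c$, and choose a polydisc $\Delta^{n}$ about $o$ small enough that $u$ is plurisubharmonic on $\Delta^{n}$ and $\int_{\Delta^{n}\cap H}e^{-2cu|_{H}}\,dV_{H}<\infty$. Applying the Ohsawa--Takegoshi extension theorem to the constant function $f\equiv 1$ on $\Delta^{n}\cap H$, with plurisubharmonic weight $cu$ on $\Delta^{n}$, produces a holomorphic function $F$ on $\Delta^{n}$ such that $F|_{H}\equiv 1$ and
$$\int_{\Delta^{n}}|F|^{2}e^{-2cu}\,dV_{\mathbb{C}^{n}}\;\leq\;C\int_{\Delta^{n}\cap H}e^{-2cu|_{H}}\,dV_{H}\;<\;\infty,$$
where $C$ depends only on the diameter of $\Delta^{n}$ (so in particular is finite).

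Since $F(o)=1$ and $F$ is continuous, there is a neighborhood $U\subset\Delta^{n}$ of $o$ on which $|F|\geq 1/2$, and hence $e^{-2cu}\leq 4|F|^{2}e^{-2cu}$ on $U$. The right-hand side is integrable by the estimate above, so $e^{-2cu}\in L^{1}_{\mathrm{loc}}$ near $o$, giving $c\leq c_{o}(u)$. Letting $c\nearrow c_{o}(u|_{H})$ completes the proof.

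The essential input is the Ohsawa--Takegoshi theorem; the only conceptual point is the choice to extend the \emph{constant} section $1$ from $H$, which then allows one to trade the weighted $L^{2}$ estimate for an unweighted integrability statement via the pointwise lower bound $|F(o)|=1$. There is no real obstacle beyond invoking the extension theorem with the correct geometric setup (coordinates adapted to $H$, a polydisc over which the weight is plurisubharmonic, and the right normalization of $dV_{H}$).
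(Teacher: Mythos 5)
Your proof is correct and follows essentially the same route the paper credits to Demailly--Koll\'ar: extend the constant function $1$ from $H$ via the Ohsawa--Takegoshi $L^{2}$ theorem with weight $cu$, then use the pointwise lower bound $|F|\geq 1/2$ near $o$ to pass from the weighted $L^{2}$ bound on $F$ to local integrability of $e^{-2cu}$. This is precisely the content encoded in the paper's Remark \ref{rem:ot_plane}, so there is no gap and no genuinely different idea at play.
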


Let $A=V(\mathcal{I}(c_{o}(u)u))$.
In \cite{GZjump-equ}, the following equality condition in Proposition \ref{prop:DK2000} has been established

\begin{Theorem}
\label{thm:jump_equality}\cite{GZjump-equ}
If $c_{o}(u|_{H})=c_{o}(u)$,
then $\dim_{o}A\geq n-k$.
\end{Theorem}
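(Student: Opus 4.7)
I would argue by contradiction, using the Ohsawa--Takegoshi $L^{2}$ extension theorem (the analytic engine behind Proposition~\ref{prop:DK2000}), Berndtsson's solution of the Demailly--Koll\'ar openness conjecture, and the strong openness property of multiplier ideal sheaves.

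Setup. Write $c_0 := c_o(u) = c_o(u|_H)$. By Berndtsson's result, $A = V(\mathcal{I}(c_0 u)) = \{z : c_z(u) \le c_0\}$ is an analytic subset of $\Omega$ with $o \in A$. Assume toward contradiction that $\dim_o A = d < n-k$. On a sufficiently small neighborhood $U$ of $o$, $A \cap U$ is analytic of dimension at most $d$, and therefore the projection $p(A \cap U)$ has dimension at most $d < n-k$ in $\mathbb{C}^{n-k}$. Hence, for $a$ in an open dense subset of a neighborhood of $0 \in \mathbb{C}^{n-k}$, the slice $H_a := p^{-1}(a)$ satisfies $H_a \cap A \cap U = \emptyset$, so $c_z(u) > c_0$ on $H_a \cap U$; by lower semicontinuity of $z \mapsto c_z(u)$ and compactness, there is $\eta_a > 0$ with $\int_W e^{-2(c_0 + \eta_a) u}\, dV < \infty$ on a neighborhood $W$ of a compact piece of $H_a$.

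Propagation and contradiction. Using Fubini over the $p$-fibers and covering the generic parameters by compact sets, I would upgrade this to a \emph{uniform} $\eta > 0$ such that $c_{(0,a)}(u|_{H_a}) \ge c_0 + \eta$ for $a$ in a set $S \subseteq \mathbb{C}^{n-k}$ of positive measure accumulating at $0$. To contradict $c_o(u|_H) = c_0$, I would then invoke a parameter version of Ohsawa--Takegoshi (in the spirit of Berndtsson--P\u aun log-plurisubharmonicity of fiber $L^{2}$-norms) together with the strong openness property $\mathcal{I}(c_0 u) = \bigcup_{\varepsilon > 0} \mathcal{I}((1+\varepsilon) c_0 u)$ to force the same lower bound to hold at the distinguished parameter $a = 0$, yielding $c_o(u|_H) \ge c_0 + \eta > c_0$ and the desired contradiction.

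Main obstacle. The decisive technical step is the propagation from the generic slices $H_a$ (which avoid $A$) to the distinguished slice $H = H_0$ (which contains $o \in A$). Plain Fubini and lower semicontinuity only provide almost-everywhere information in $a$, saying nothing directly about $a = 0$. Bridging this gap calls for an auxiliary psh weight of the form $\alpha \log|g|^{2}$, where $g_1, \ldots, g_{n-d}$ are local generators of a coherent ideal whose zero set is $A$ near $o$; for small $\alpha > 0$ this weight is locally integrable on $\Omega$ because $\dim A = d$ is small, and it can be calibrated so that the weighted OT extensions of the constant $1$ from $H$ (valid for each $c < c_0$ since $c_o(u|_H) = c_0$) have a parameter-uniform $L^{2}$ bound. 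Then the limit $c \to c_0^{-}$, together with strong openness, would produce an element of $\mathcal{I}(c_0 u)_o$ whose restriction to $H$ contradicts the non-integrability of $e^{-2 c_0 u|_H}$ near $o$. Making this calibration precise---balancing the weight's contribution on $\Omega$ against its restriction to $H$, where the crucial input $\dim_o A < n-k$ enters---is the heart of the argument.
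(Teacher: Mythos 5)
Theorem~\ref{thm:jump_equality} is quoted here from \cite{GZjump-equ}; the present paper does not reprove it but uses it as an ingredient in the stronger Theorem~\ref{thm:jump_equality_dim_sing}. Your plan---assume $\dim_o A<n-k$, observe that generic slices $H_a$ avoid $A$, and propagate the improved exponent back to $H=H_0$---stalls exactly at the step you flag as the ``heart of the argument,'' and I do not see how your sketch closes it. All of the tools you invoke (Fubini over $p$-fibers, lower semicontinuity of $z\mapsto c_z(u)$, and the almost-everywhere constancy of $a\mapsto c_{z_a}(u|_{H_a})$ coming from Berndtsson's log-plurisubharmonicity of relative Bergman kernels, cf.\ Remark~\ref{rem:lct_slice_open}) yield statements for \emph{almost every} $a$ and say nothing at $a=0$. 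In fact there is no tension to feed a contradiction: after replacing $u$ by $\tilde u=\max\{c_0u,\,p_0\log|J_0|\}$ as in Proposition~\ref{Pro:GZ1005}, one would have $c_{(0,a)}(\tilde u|_{H_a})=+\infty$ for a.e.\ $a$ (those slices miss $A$) while $c_o(\tilde u|_H)=1$, and this coexists perfectly: each slice integral $\int_{H_a}e^{-2t\tilde u}\,d\lambda_k$ is finite for generic $a$ even though the Fubini integral over $a$ diverges for $t>1$. Upper semicontinuity of the log-plurisubharmonic Bergman kernel bounds $K_0(o)$ from \emph{below} by a limsup over nearby $a$, not from above, so it cannot force $c_o(u|_H)>c_0$ either.

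The auxiliary weight $\alpha\log|g|^2$ does not repair the break. Whatever sign you attach, the parameter-uniform $L^2$ bound you want for Ohsawa--Takegoshi extensions of $1$ from $H$ is governed by $\int_H e^{-2cu|_H}\,|g|^{\pm 2\alpha}\,d\lambda_H$ as $c\to c_0^{-}$; the divergence is caused by the non-integrability of $e^{-2c_0u|_H}$ near $o\in A\cap H$ (Berndtsson's openness), where $g$ itself vanishes, so a simple calibration by powers of $g$ does not localize away the blow-up. What actually proves the theorem---and what the present paper does when it upgrades it to Theorem~\ref{thm:jump_equality_dim_sing}---is not a contradiction from avoidance but a direct construction: normalize via $\tilde u$ (Proposition~\ref{Pro:GZ1005}, Remark~\ref{rem:GZ1005}), slice \emph{through} $A\cap H$ rather than away from it (Remark~\ref{rem:lct_slice_sing}), use the a.e.\ constancy to exhibit many slices on which the exponent is again $1$, invoke the lower-dimensional statement on each, and assemble the resulting pieces of $A$ by a Hausdorff-measure count (Federer, Theorem 3.2.22). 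Your ingredient list is largely correct, but the ingredients must be used to \emph{build} $A$ to the required dimension, not to derive a contradiction from its smallness.
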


For the case $k=1$ and $n=2$, Theorem \ref{thm:jump_equality} can be referred to \cite{B-M} and \cite{FM05j}.

For the case $k=1$ and $n>2$, Theorem \ref{thm:jump_equality} can be referred to \cite{GZopen-lelong}
(a recent new proof by combining methods in \cite{GZopen-lelong} and \cite{demailly-Pham} can be referred to \cite{Rash1501}).

In the present article,
combining recent results in \cite{GZjump-equ} with some new ideas,
we obtain the following sharp equality condition in Proposition \ref{prop:DK2000}
by giving the equality between $n-\dim_{o}A$ and $k-\dim_{o}(A\cap H)$.
\begin{Theorem}
\label{thm:jump_equality_dim_sing}
If $c_{o}(u|_{H})=c_{o}(u)$,
then $\dim_{o}A=n-k+\dim_{o}(A\cap H)$.
\end{Theorem}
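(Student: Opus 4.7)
The inequality $\dim_{o}A \leq (n-k) + \dim_{o}(A\cap H)$ is standard dimension theory: $H$ is cut out in $\mathbb{C}^{n}$ by the $n-k$ coordinate functions $z_{k+1},\ldots,z_{n}$, and iterating Krull's Hauptidealsatz (each hyperplane section drops analytic dimension by at most one) gives this bound.

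For the reverse inequality I plan to induct on the codimension $n-k$. The base case $n-k=0$ is trivial. For the inductive step, I insert a $(k+1)$-dimensional linear subspace $H'$ with $H \subset H' \subset \mathbb{C}^{n}$. The restriction formula (Proposition \ref{prop:DK2000}) together with the equality hypothesis forces $c_{o}(u|_{H'}) = c_{o}(u|_{H}) = c_{o}(u) = c$. The inductive hypothesis applied to the pair $(H',\mathbb{C}^{n})$, of codimension $n-k-1$, yields $\dim_{o}A = 1 + \dim_{o}(A\cap H')$, so the task reduces to proving $\dim_{o}(A\cap H') = (n-k-1) + \dim_{o}(A\cap H)$, which is Theorem \ref{thm:jump_equality_dim_sing} applied inside $H'$ (with $u|_{H'}$ in place of $u$) to the codimension-one pair $(H, H')$.

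The subtlety in this inner application is that the natural ``$A$'' for ambient $H'$ is $V_{H'} := V(\mathcal{I}_{H'}(c\cdot u|_{H'}))$, and only the inclusion $A \cap H' \subseteq V_{H'}$ is a priori available (via the pointwise inequality $c_{z}(u|_{H'}) \leq c_{z}(u)$). I would therefore establish, as the technical heart of the proof, an auxiliary comparison lemma: under the hypothesis $c_{o}(u|_{H}) = c_{o}(u) = c$, the varieties $V_{H}$ and $A \cap H$ have equal local dimension at $o$. The key ingredients are Ohsawa--Takegoshi extension (to lift local generators of $\mathcal{I}_{H}(cu|_{H})$ to sections of $\mathcal{I}(cu)$), the strong openness property \cite{GZopen-a,GZopen-b,GZopen-c}, and the analytic description of jumping loci from \cite{GZjump-equ}. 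The principal obstacle is precisely this lemma: the equality hypothesis is a pointwise condition at $o$, whereas $V_{H} \setminus (A \cap H)$ may a priori contain nearby points $z$ where $c_{z}(u|_{H}) < c_{z}(u)$, and one must rule out that such points produce top-dimensional components of $V_{H}$ at $o$. This is where the new ideas announced in the introduction are most likely to enter, via a jumping-number argument in the spirit of \cite{GZjump-equ} applied to carefully chosen coherent ideal sheaves capturing the restriction behaviour of $\mathcal{I}(cu)$ along $H$.
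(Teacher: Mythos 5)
Your reduction has the right easy half (the upper bound $\dim_{o}A\leq n-k+\dim_{o}(A\cap H)$ is indeed standard), but the inductive scheme for the lower bound has two genuine gaps. First, the ``auxiliary comparison lemma'' you defer to ($\dim_{o}V_{H'}=\dim_{o}(A\cap H')$ for the intermediate plane, equivalently the inequality $\dim_{o}V_{H'}\leq\dim_{o}(A\cap H')$) is not proved, and the tools you name do not obviously reach it: Ohsawa--Takegoshi extension of generators of $\mathcal{I}_{H'}(c\,u|_{H'})$ only reproduces the trivial inclusion $A\cap H'\subseteq V_{H'}$, i.e.\ the inequality in the \emph{wrong} direction, while the hard direction must rule out that $H'$ is ``tangent'' to the singularity along a large set through $o$ without being tangent at $o$ itself --- this carries essentially the full difficulty of Theorem \ref{thm:jump_equality_dim_sing}. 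The paper does not pass through any such lemma: after normalizing to exponent one via $\tilde u=\max\{c_{o}(u)u,\,p\log|\mathcal{I}(c_{o}(u)u)_{o}|\}$ (Proposition \ref{Pro:GZ1005}, Remark \ref{rem:GZ1005}), it takes a top-dimensional component $A_{3}$ of $A\cap H$, applies the local parametrization theorem, then a measure-theoretic slicing statement (Remark \ref{rem:lct_slice_sing}, resting on Berndtsson's openness theorem and the log-plurisubharmonicity of relative Bergman kernels) to find, for a.e.\ slice $\tilde L_{a}$ transverse to $A_{3}$, a point $z_{a}$ with $c_{z_{a}}(\tilde u|_{L_{a}})=c_{z_{a}}(\tilde u|_{\tilde L_{a}})=1$; Theorem \ref{thm:jump_equality} applied on each $\tilde L_{a}$ gives fibers of $A$ of dimension $\geq n-k$, and a Hausdorff-measure count (Federer) yields $\dim_{o}A\geq n-l$. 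None of these ingredients appear in your plan, so the ``heart'' of your argument is currently an unproved statement of comparable depth to the theorem.

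Second, the induction itself is not correctly anchored. The base case $n-k=0$ is vacuous for your step: a proper intermediate $H'$ with $H\subsetneq H'\subsetneq\mathbb{C}^{n}$ exists only when $n-k\geq2$, and for $n-k=1$ your ``inner application to the codimension-one pair $(H,H')$'' is literally the original statement, so the codimension-one case must be proved independently --- and it is not trivial: Theorem \ref{thm:jump_equality} only gives $\dim_{o}A\geq1$ there, whereas you need $\dim_{o}A\geq1+\dim_{o}(A\cap H)$, which already contains the essential new content. (There is also a bookkeeping slip: the pair $(H',\mathbb{C}^{n})$ of codimension $n-k-1$ should yield $\dim_{o}A=(n-k-1)+\dim_{o}(A\cap H')$, and the remaining task is $\dim_{o}(A\cap H')=1+\dim_{o}(A\cap H)$; you swapped the two increments. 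Granting the comparison lemma and the codimension-one base case, the chain does close, since $A\cap H\subseteq V_{H'}\cap H$ gives the needed monotonicity, but as it stands both load-bearing pieces are missing.)
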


It is known that for the case $k=1$ and any $n$,
one can obtain the regularity of $(A,o)$ (see \cite{GZopen-lelong} and \cite{GZjump-equ}).
Then it is natural to consider the regularity of $(A,o)$ for general $k$.
When $n=2$, and $u=\log|z_{2}-z_{1}^{2}|$ $(H=\{z_{2}=0\})$, one can obtain that $c_{o}(u|_{H})=1/2<1=c_{o}(u)$.
It is natural to consider the transversality between $(A,o)$ and $(H,o)$.

In the present article,
we obtain the following sharp equality condition in Proposition \ref{prop:DK2000}
by giving the equivalence between the transversality (between $(A,o)$ and $(H,o)$)
and the regularity of $(A\cap H,o)$.

\begin{Theorem}
\label{thm:lct_slice_graph}
If $c_{o}(u|_{H})=c_{o}(u)$,
then the following statements are equivalent

$(1)$ $(A\cap H,o)$ is regular;

$(2)$ there exist coordinates $(w_{1},\cdots,w_{k},z_{k+1},\cdots,z_{n})$ near $o$ and $l\in\{1,\cdots,k\}$
such that $(A,o)=(w_{1}=\cdots=w_{l}=0,o)$;

$(3)$ there exist coordinates $(w_{1},\cdots,w_{k},z_{k+1},\cdots,z_{n})$ near $o$ and $l\in\{1,\cdots,k\}$
such that $\mathcal{I}(c_{o}(u)u)_{o}=(w_{1},\cdots,w_{l})_{o}$.
\end{Theorem}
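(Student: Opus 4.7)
The implications $(3)\Rightarrow(2)\Rightarrow(1)$ are immediate: if $\mathcal{I}(c_{o}(u)u)_{o}=(w_{1},\ldots,w_{l})_{o}$ in coordinates as in $(3)$, then $(A,o)=(\{w_{1}=\cdots=w_{l}=0\},o)$, which is $(2)$; and then $(A\cap H,o)=(\{w_{1}=\cdots=w_{l}=0\}\cap H,o)$ is a linear subspace, hence regular, proving $(1)$.

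For $(1)\Rightarrow(3)$ (which simultaneously yields $(2)$), write $c=c_{o}(u)=c_{o}(u|_{H})$ and $d=\dim_{o}(A\cap H)$. Theorem~\ref{thm:jump_equality_dim_sing} gives $\dim_{o}A=n-k+d$, so $A$ has codimension $l:=k-d$ at $o$. Since $(A\cap H,o)$ is a $d$-dimensional regular germ in $H$, a biholomorphic change of only the first $k$ coordinates (leaving $z_{k+1},\ldots,z_{n}$, and hence $H$, fixed) arranges $(A\cap H,o)=(\{w_{1}=\cdots=w_{l}=0\}\cap H,o)$. My plan is now to construct holomorphic germs $F_{1},\ldots,F_{l}\in\mathcal{I}(cu)_{o}$ with $F_{i}|_{H}=w_{i}$. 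Granting such $F_{i}$: one writes $F_{i}=w_{i}+\sum_{j>k}z_{j}h_{ij}$ for some $h_{ij}\in\mathcal{O}_{o}$, so the differentials $dF_{1}(o),\ldots,dF_{l}(o)$ are linearly independent and $V(F_{1},\ldots,F_{l})$ is a regular germ of codimension $l$ at $o$; each $F_{i}$ vanishes on $V(\mathcal{I}(cu))=A$, so $A\subseteq V(F_{1},\ldots,F_{l})$, and the equality $\dim_{o}A=n-l$ forces $A=V(F_{1},\ldots,F_{l})$ as germs at $o$. The tuple $(F_{1},\ldots,F_{l},w_{l+1},\ldots,w_{k},z_{k+1},\ldots,z_{n})$ then forms a new coordinate system (block upper triangular Jacobian with unit diagonal at $o$) of the form required in $(2)$ and $(3)$, in which $A=\{F_{1}=\cdots=F_{l}=0\}$; since $\mathcal{I}(cu)_{o}\subseteq I(A)_{o}=(F_{1},\ldots,F_{l})$ is automatic from $V(\mathcal{I}(cu))=A$ while $(F_{1},\ldots,F_{l})\subseteq\mathcal{I}(cu)_{o}$ by construction, we obtain $\mathcal{I}(cu)_{o}=(F_{1},\ldots,F_{l})$, which is $(3)$ after renaming $F_{i}$ to $w_{i}$.

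The heart of the argument is therefore the construction of the $F_{i}$. By the Ohsawa--Takegoshi extension theorem (the tool underlying Proposition~\ref{prop:DK2000}), this reduces to showing $w_{1},\ldots,w_{l}\in\mathcal{I}(cu|_{H})_{o}$, i.e., to the integrability of $|w_{i}|^{2}e^{-2cu|_{H}}$ near $o$ on $H$. This is the step I expect to be the main obstacle. The containment $\mathcal{I}(cu|_{H})_{o}\subseteq(w_{1},\ldots,w_{l})_{o}$ is automatic from $V\subseteq V(\mathcal{I}(cu|_{H}))$ (together with $(w_{1},\ldots,w_{l})=I(V)$ being the radical); for the delicate reverse inclusion, my plan is to combine the strong openness property \cite{GZopen-a,GZopen-b,GZopen-c} with Theorem~\ref{thm:jump_equality_dim_sing} applied (within $H$) along $l$-dimensional submanifolds of $H$ transverse to $V$ at $o$, invoking the prior $k=1$ regularity results of \cite{GZopen-lelong,GZjump-equ} on such transverse slices to pin down the local structure of $\mathcal{I}(cu|_{H})$ sharply enough to verify the desired $L^{1}$ bound. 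A fallback strategy would be an induction argument based on the prior $k=1$ case of \cite{GZopen-lelong,GZjump-equ} together with interpolation between $H$ and $\mathbb{C}^{n}$ via intermediate submanifolds $H\subset H'\subset\mathbb{C}^{n}$, chaining Theorem~\ref{thm:jump_equality_dim_sing} through the pairs $(u,H')$ and $(u|_{H'},H)$.
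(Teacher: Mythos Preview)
Your overall architecture matches the paper's: reduce $(1)\Rightarrow(2)$ to producing extensions $F_i\in\mathcal{I}(cu)_o$ of local equations $w_i$ of $A\cap H$ via Ohsawa--Takegoshi (Remark~\ref{rem:ot_plane}), then use the dimension equality of Theorem~\ref{thm:jump_equality_dim_sing} to conclude $(A,o)=(\{F_1=\cdots=F_l=0\},o)$. You also correctly isolate the crux as the integrability $w_i\in\mathcal{I}(cu|_H)_o$, and your endgame (recovering $(3)$ from $(F_1,\dots,F_l)\subseteq\mathcal{I}(cu)_o\subseteq I(A)_o=(F_1,\dots,F_l)$) is sound.

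The gap is precisely at the step you flag. Your proposed attacks (transverse slices plus the $k=1$ regularity results, or an induction through intermediate $H\subset H'\subset\mathbb{C}^n$) are not what the paper does, and it is not clear they would succeed: the underlying obstruction is that a priori $(V(\mathcal{I}(cu|_H)),o)$ can strictly contain $(A\cap H,o)$, because restriction to $H$ may create new non-integrability at points of $H\setminus A$; neither slicing nor induction obviously controls this. The paper's device is instead Proposition~\ref{Pro:GZ1005}: replace $c_o(u)u$ by $\tilde u=\max\{c_o(u)u,\,p_0\log|J_0|\}$ with $J_0=\mathcal{I}(c_o(u)u)_o$. This keeps $\mathcal{I}(\tilde u)_o=\mathcal{I}(c_o(u)u)_o$ and $c_o(\tilde u|_H)=c_o(\tilde u)=1$, but forces the pole set of $\tilde u$ into $A$, whence $(V(\mathcal{I}(\tilde u|_H)),o)=(A\cap H,o)$ on the nose (equation~\eqref{equ:151009b}). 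At that point Corollary~\ref{coro:jump_supp}---the consequence of strong openness you gesture toward---applies directly: since $c_o(\tilde u|_H)=1$ and $V((w_1,\dots,w_l))=A\cap H\supseteq V(\mathcal{I}(\tilde u|_H))$, one obtains $w_i\in\mathcal{I}(\tilde u|_H)_o$ immediately, with no slicing or induction. The rest of your argument then goes through verbatim with $\tilde u$ in place of $cu$.
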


\subsection{Applications:
Sharp equality conditions in the fundamental subadditivity property of complex singularity exponents}

Let $u$ and $v$ be plurisubharmonic functions near $o\in\mathbb{C}^{n}$.
In \cite{D-K01,GZjump-equ},
the fundamental subadditivity property of complex singularity exponents
has been presented.

\begin{Theorem}
\label{thm:subadd_cse_general}\cite{D-K01,GZjump-equ} $c_{o}(\max\{u,v\})\leq c_{o}(u)+c_{o}(v).$
\end{Theorem}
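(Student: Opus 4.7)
The plan is to reduce the subadditivity statement to the restriction formula (Proposition~\ref{prop:DK2000}) via a standard product-space construction. I would first form the plurisubharmonic function
$$w(z,\zeta) := \max\{u(z),\,v(\zeta)\}$$
on $\Omega\times\Omega\subset\mathbb{C}^{2n}$ near $(o,o)$ and observe that on the diagonal $\Delta:=\{z=\zeta\}$ one has $w|_\Delta(z)=\max\{u(z),v(z)\}$. After the linear change of coordinates $(z,\zeta)\mapsto(z-\zeta,\zeta)$, $\Delta$ becomes a coordinate subspace of the form required by Proposition~\ref{prop:DK2000}, and the restriction formula yields
$$c_o(\max\{u,v\}) = c_{(o,o)}(w|_\Delta) \leq c_{(o,o)}(w).$$

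The remaining task is to establish the product formula $c_{(o,o)}(w)\leq c_o(u)+c_o(v)$. Here the key structural identity is $\{w<-t\}=\{u<-t\}\times\{v<-t\}$, which via the layer-cake representation of $\int e^{-2cw}\,dV$ converts the question into one about the product of sublevel-set volumes of $u$ and $v$. The integrability characterization of $c_o$ encodes the exponential decay rate of these volumes (rate $2c_o(u)$ for $u$ and rate $2c_o(v)$ for $v$), so the product decays at rate $2(c_o(u)+c_o(v))$, and a Fubini-type calculation confirms that $\int e^{-2cw}\,dV$ diverges precisely when $c>c_o(u)+c_o(v)$.

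The main obstacle is the sublevel-set estimate in the product step: the definition of $c_o(u)$ controls the decay of $|\{u<-t\}|$ only in an averaged sense, so extracting the product asymptotic requires a sharpness input. Berndtsson's openness theorem, recalled earlier in the paper, asserts that $e^{-2c_o(u)u}$ fails to be locally integrable near $o$, and this supplies precisely the critical-exponent sharpness needed. An alternative route that sidesteps the product formula entirely, taken in \cite{D-K01}, is to apply the Ohsawa--Takegoshi $L^2$ extension theorem directly to the diagonal $\Delta\subset\Omega\times\Omega$ with an appropriate weight derived from $u+v$, obtaining the bound $c_o(u)+c_o(v)$ without computing the product complex singularity exponent.
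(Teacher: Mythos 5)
Your overall architecture is the same as the paper's: pass to the product space, restrict $\max\{\pi_{1}^{*}u,\pi_{2}^{*}v\}$ to the diagonal via the Ohsawa--Takegoshi restriction formula (Proposition \ref{prop:DK2000}), and then bound the ambient exponent by $c_{o}(u)+c_{o}(v)$. The gap is in that last step, the inequality $c_{(o,o)}(\max\{\pi_{1}^{*}u,\pi_{2}^{*}v\})\leq c_{o}(u)+c_{o}(v)$, which you try to extract from the factorization $\{w<-t\}=\{u<-t\}\times\{v<-t\}$ plus the layer-cake formula. That identity reduces the claim to the divergence of $\int_{0}^{\infty}|\{u<-t\}\cap V_{1}|\,|\{v<-t\}\cap V_{2}|\,e^{2ct}\,dt$ for every $c>c_{o}(u)+c_{o}(v)$, and this requires lower bounds on the two volume functions at a \emph{common} scale $t$. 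But the definition of $c_{o}$ only gives upper bounds $|\{u<-t\}|\leq C_{\varepsilon}e^{-2(c_{o}(u)-\varepsilon)t}$ (from integrability below the critical exponent), while non-integrability above, or at, the critical exponent (Berndtsson) gives only an averaged divergence, hence lower bounds along some sequence of $t$'s; the sequences for $u$ and for $v$ may be disjoint. At the level of arbitrary non-increasing volume functions with exactly these properties the desired divergence can fail (let $-\log|\{u<-t\}|/t$ and $-\log|\{v<-t\}|/t$ oscillate out of phase, so the liminf of the sum exceeds the sum of the liminfs), so the asserted ``Fubini-type calculation confirms divergence'' does not follow, even with Berndtsson's endpoint non-integrability as input: the inequality uses genuinely more about plurisubharmonic functions than the separate distribution functions of $u$ and $v$. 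Note also that the direction which \emph{does} follow from your pointwise/Fubini reasoning (via $c_{1}u(z)+c_{2}v(\zeta)\leq(c_{1}+c_{2})\max\{u(z),v(\zeta)\}$) is $c_{(o,o)}\geq c_{o}(u)+c_{o}(v)$, which is not the one needed here.

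The missing statement is exactly Proposition \ref{prop:add_prod_cse}, which the paper records (citing \cite{D-K01,GZjump-equ}) as the ingredient used to prove Theorem \ref{thm:subadd_cse_general}; if you simply invoke it, your first paragraph already completes the proof and coincides with the paper's route. Your closing remark also misreads \cite{D-K01}: their argument is not an alternative that sidesteps the product formula --- it is precisely the combination of the product identity with the Ohsawa--Takegoshi restriction to the diagonal, and an extension argument with a weight ``derived from $u+v$'' would still require knowing the singularity exponent of that ambient weight, which is the same nontrivial point.
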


Let $c=c_{o}(u)+c_{o}(v)$.
Let $A_{1}=V(\mathcal{I}(cu))$ and $A_{2}=V(\mathcal{I}(cv))$.
In \cite{GZjump-equ}, the following sharp equality condition in Theorem \ref{thm:subadd_cse_general}
has been established.
\begin{Theorem}
\label{thm:equality_subadd_lct}\cite{GZjump-equ}
If $c_{o}(\max\{u,v\})=c$,
then $\dim_{o}A_{1}+\dim_{o}A_{2}\geq n$.
\end{Theorem}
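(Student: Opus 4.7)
The plan is to combine the diagonal-lifting construction underlying the standard proof of subadditivity (Theorem~\ref{thm:subadd_cse_general}) with the restriction-equality of Theorem~\ref{thm:jump_equality}. On $\mathbb{C}^{2n}=\mathbb{C}^{n}_{z}\times\mathbb{C}^{n}_{w}$ I introduce the plurisubharmonic function
$$\tilde{u}(z,w):=\max\{u(z),\,v(w)\}$$
and the diagonal submanifold $H=\{z=w\}\subset\mathbb{C}^{2n}$, which has codimension $n$; under the identification $H\cong\mathbb{C}^{n}$ via $(z,z)\mapsto z$, one has $\tilde{u}|_{H}=\max\{u,v\}$.

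The first key step is to identify
$$c_{(o,o)}(\tilde{u})\;=\;c_{o}(u)+c_{o}(v)\;=\;c.$$
The lower bound $c_{(o,o)}(\tilde{u})\geq c$ follows immediately from the restriction formula (Proposition~\ref{prop:DK2000}) applied to $(\tilde{u},H)$ together with the hypothesis $c_{o}(\tilde{u}|_{H})=c_{o}(\max\{u,v\})=c$. The upper bound $c_{(o,o)}(\tilde{u})\leq c_{o}(u)+c_{o}(v)$ is the Fubini-type integral estimate that already drives the proof of subadditivity: splitting $\int e^{-2c'\tilde{u}}\,dz\,dw$ over the regions $\{u(z)\geq v(w)\}$ and $\{u(z)<v(w)\}$ and integrating in the appropriate variable first yields the required bound. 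Equality in the restriction formula for $(\tilde{u},H)$ at $(o,o)$ therefore holds, and Theorem~\ref{thm:jump_equality} (applied on $\mathbb{C}^{2n}$ with $\dim H=n$) gives
$$\dim_{(o,o)}V(\mathcal{I}(c\tilde{u}))\;\geq\;2n-n\;=\;n.$$

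The second step is the local inclusion $V(\mathcal{I}(c\tilde{u}))\subseteq A_{1}\times A_{2}$ near $(o,o)$. If $(z_{0},w_{0})\notin A_{1}\times A_{2}$ then, by symmetry, I may assume $z_{0}\notin A_{1}$, so $e^{-2cu}$ is integrable on some neighborhood $U$ of $z_{0}$. For any relatively compact neighborhood $V$ of $w_{0}$, the pointwise bound $e^{-2c\tilde{u}(z,w)}\leq e^{-2cu(z)}$ and Fubini give
$$\int_{U\times V}e^{-2c\tilde{u}}\,dz\,dw\;\leq\;|V|\int_{U}e^{-2cu(z)}\,dz\;<\;\infty,$$
whence $1\in\mathcal{I}(c\tilde{u})_{(z_{0},w_{0})}$ and $(z_{0},w_{0})\notin V(\mathcal{I}(c\tilde{u}))$. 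Combining the two steps,
$$n\;\leq\;\dim_{(o,o)}V(\mathcal{I}(c\tilde{u}))\;\leq\;\dim_{(o,o)}(A_{1}\times A_{2})\;=\;\dim_{o}A_{1}+\dim_{o}A_{2},$$
which is Theorem~\ref{thm:equality_subadd_lct}.

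The main obstacle is the upper bound $c_{(o,o)}(\tilde{u})\leq c_{o}(u)+c_{o}(v)$ in the first step; everything else is either a direct invocation of Theorem~\ref{thm:jump_equality} or elementary bookkeeping on zero varieties of products. Since the same Fubini-type estimate is precisely what proves Theorem~\ref{thm:subadd_cse_general}, I expect it to be available among the preparatory results in Section~\ref{sec:prepara}.
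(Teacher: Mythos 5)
Your argument is correct and is essentially the paper's own approach: Theorem \ref{thm:equality_subadd_lct} is only cited from \cite{GZjump-equ}, but the paper proves the stronger Proposition \ref{prop:add_dim_nonregular} by exactly this diagonal/product-space scheme (with Theorem \ref{thm:jump_equality_dim_sing} in place of Theorem \ref{thm:jump_equality}), and both of your auxiliary inputs---the identity $c_{(o,o)}(\max\{\pi_{1}^{*}u,\pi_{2}^{*}v\})=c_{o}(u)+c_{o}(v)$ and the containment of $V(\mathcal{I}(c\tilde{u}))$ in $A_{1}\times A_{2}$ near $(o,o)$---are supplied by Proposition \ref{prop:add_prod_cse}. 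One caveat: your Fubini-splitting sketch of the upper bound $c_{(o,o)}(\tilde{u})\leq c_{o}(u)+c_{o}(v)$ is not by itself complete (after splitting, the inner integral is truncated to the region $\{u(z)\geq v(w)\}$, so one still needs sublevel-set volume estimates rather than plain Fubini), but since this estimate is precisely the quotable Proposition \ref{prop:add_prod_cse}, which you correctly anticipated among the preparatory results, there is no substantive gap.
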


Let $B=\{z|c_{z}(u)+c_{z}(v)\leq c\}$,
which is an analytic subset on $A_{1}\cap A_{2}$ (see subsection \ref{sec:add_nonregular}).

As an application of Theorem \ref{thm:jump_equality_dim_sing},
we present the following more precise version of Theorem \ref{thm:equality_subadd_lct}

\begin{Proposition}
\label{prop:add_dim_nonregular}
If $c_{o}(\max\{u,v\})=c$,
then $\dim_{o}A_{1}+\dim_{o}A_{2}\geq n+\dim_{o}B$.
\end{Proposition}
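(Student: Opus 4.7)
The plan is to lift $u$ and $v$ to independent variables on $\mathbb{C}^{2n}$ and reduce the subadditivity equality case to the restriction-formula equality case, so that Theorem \ref{thm:jump_equality_dim_sing} can be applied directly. Define
$$\Phi(z,w):=\max\{u(z),v(w)\}\quad\text{on }\mathbb{C}^{n}_{z}\times\mathbb{C}^{n}_{w},$$
and let $H:=\{(z,w):z=w\}$, a submanifold of codimension $n$ through $(o,o)$. Set $\tilde A:=V(\mathcal{I}(c\Phi))\subset\mathbb{C}^{2n}$.

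First I would establish the pointwise identity
$$c_{(z_{0},w_{0})}(\Phi)=c_{z_{0}}(u)+c_{w_{0}}(v)\qquad\text{for every }(z_{0},w_{0}).$$
The upper bound is Theorem \ref{thm:subadd_cse_general} applied to the lifts $\tilde u(z,w)=u(z)$ and $\tilde v(z,w)=v(w)$, after noting that $c_{(z_{0},w_{0})}(\tilde u)=c_{z_{0}}(u)$ and $c_{(z_{0},w_{0})}(\tilde v)=c_{w_{0}}(v)$ (the integrals separate into a product). The lower bound uses the elementary estimate $\min(a,b)\leq a^{\alpha}b^{1-\alpha}$ for $a,b>0$ and $\alpha\in[0,1]$: writing any $c'<c_{z_{0}}(u)+c_{w_{0}}(v)$ as $c'=c'_{1}+c'_{2}$ with $c'_{1}<c_{z_{0}}(u)$ and $c'_{2}<c_{w_{0}}(v)$, one has
$$e^{-2c'\Phi(z,w)}=\min\bigl(e^{-2c'u(z)},e^{-2c'v(w)}\bigr)\leq e^{-2c'_{1}u(z)}\,e^{-2c'_{2}v(w)},$$
and the right-hand side is locally integrable near $(z_{0},w_{0})$ as a product of integrable factors (by strong openness). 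In particular $c_{(o,o)}(\Phi)=c_{o}(u)+c_{o}(v)=c$.

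Since $\Phi|_{H}(z)=\max\{u(z),v(z)\}$, the hypothesis gives $c_{o}(\Phi|_{H})=c=c_{(o,o)}(\Phi)$, so the equality case of the restriction formula holds on $(\mathbb{C}^{2n},H)$. Applying Theorem \ref{thm:jump_equality_dim_sing} with codimension $k=n$ yields
$$\dim_{(o,o)}\tilde A=n+\dim_{(o,o)}(\tilde A\cap H).$$
Now I would use the pointwise identity plus Berndtsson's characterization to rewrite $\tilde A$ as
$$\tilde A=\{(z,w)\in\mathbb{C}^{2n}:c_{z}(u)+c_{w}(v)\leq c\}.$$
Two consequences follow at once. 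Since $c_{z}(u)+c_{w}(v)\leq c$ and both summands are nonnegative, $\tilde A\subseteq A_{1}\times A_{2}$, and hence $\dim_{(o,o)}\tilde A\leq\dim_{o}A_{1}+\dim_{o}A_{2}$. Moreover, under the diagonal identification $H\cong\mathbb{C}^{n}$, the intersection $\tilde A\cap H$ corresponds exactly to $B=\{z:c_{z}(u)+c_{z}(v)\leq c\}$, so $\dim_{(o,o)}(\tilde A\cap H)=\dim_{o}B$.

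Combining these,
$$\dim_{o}A_{1}+\dim_{o}A_{2}\;\geq\;\dim_{(o,o)}\tilde A\;=\;n+\dim_{(o,o)}(\tilde A\cap H)\;=\;n+\dim_{o}B,$$
which is the claim. The main conceptual step is the reduction: recognizing that subadditivity-with-equality on $\mathbb{C}^{n}$ lifts to restriction-with-equality on $\mathbb{C}^{2n}$ via the $\max$-construction. The only technical point is the pointwise formula $c_{(z_{0},w_{0})}(\Phi)=c_{z_{0}}(u)+c_{w_{0}}(v)$, and in particular controlling $\tilde A$ set-theoretically; but this is already implicit in the framework of \cite{GZjump-equ} used to prove Theorem \ref{thm:equality_subadd_lct}, and the Hölder-type bound above makes it self-contained.
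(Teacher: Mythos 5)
Your proposal is correct and follows essentially the same route as the paper: lift to the product space via $\max\{\pi_{1}^{*}u,\pi_{2}^{*}v\}$, observe that the hypothesis becomes the equality case of the restriction formula along the diagonal, apply Theorem \ref{thm:jump_equality_dim_sing} with $H\sim\Delta$, and finish with the inclusion of the zero variety in $A_{1}\times A_{2}$ and the identification of $B$ with the diagonal slice. The only difference is cosmetic: the pointwise identity $c_{(z_{0},w_{0})}(\max\{\pi_{1}^{*}u,\pi_{2}^{*}v\})=c_{z_{0}}(u)+c_{w_{0}}(v)$ that you verify by hand (H\"older-type bound plus separation of variables) is exactly Proposition \ref{prop:add_prod_cse}, which the paper simply cites from \cite{D-K01,GZjump-equ}.
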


Let $n=2$, $u=\log|z_{1}|$, $v=\log|z_{1}-z_{2}^{2}|$.
As
$(|z_{1}|+|z_{2}^{2}|)/6\leq\max\{|z_{1}|,|z_{1}-z_{2}^{2}|\}\leq6(|z_{1}|+|z_{2}^{2}|)$,
it is clear that $c_{o}(\max\{u,v\})=1+1/2<2=c$,
$A_{1}\cap A_{2}=\{o\}$.
Then it is natural to consider the transversality between $A_{1}$ and $A_{2}$.

As an application of Theorem \ref{thm:lct_slice_graph},
we present
the following sharp equality condition in Theorem \ref{thm:subadd_cse_general}
by giving the regularity of $A_{1}$ and $A_{2}$ and the transversality between $A_{1}$ and $A_{2}$.

\begin{Proposition}
\label{prop:lct_add_graph}
Assume that $(A_{1},o)$ and $(A_{2},o)$ are both irreducible
such that $(B,o)=(A_{1}\cap A_{2},o)$, which is regular.
If $c_{o}(\max\{u,v\})=c$,
then both $(A_{1},o)$ and $(A_{2},o)$ are regular such that $\dim (T_{A_{1},o}+T_{A_{2},o})=n$.
\end{Proposition}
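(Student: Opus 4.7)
The plan is to reduce to Theorem~\ref{thm:lct_slice_graph} on a product space $\mathbb{C}^{2n}$, in direct parallel with the way Proposition~\ref{prop:add_dim_nonregular} is obtained from Theorem~\ref{thm:jump_equality_dim_sing}. Concretely, I would work on $\mathbb{C}^{2n}$ with coordinates $(z,w) = (z_1,\ldots,z_n,w_1,\ldots,w_n)$, set $\Phi(z,w) := \max\{u(z), v(w)\}$, take $H := \{z = w\}$ to be the diagonal submanifold of codimension $n$, and put $\widetilde{A} := V(\mathcal{I}(c\Phi))$. A standard product-type computation for complex singularity exponents --- either to be cited or inserted in Section~\ref{sec:prepara} --- gives $c_{(o,o)}(\Phi) = c_o(u) + c_o(v) = c$, and under the identification $H \cong \mathbb{C}^n$ along the diagonal, $\Phi|_H(z) = \max\{u(z), v(z)\}$ and $\widetilde{A} \cap H = B$. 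Thus the hypothesis $c_o(\max\{u, v\}) = c$ becomes the equality $c_o(\Phi|_H) = c_{(o,o)}(\Phi)$ in the restriction formula for $(\Phi, H)$ on $\mathbb{C}^{2n}$, and the hypothesis that $(B, o) = (A_1 \cap A_2, o)$ is regular becomes the regularity of $(\widetilde{A} \cap H, (o,o))$. Applying Theorem~\ref{thm:lct_slice_graph} with $n$ and $k$ replaced by $2n$ and $n$, condition~(1) yields condition~(2): $(\widetilde{A}, (o, o))$ is regular of codimension $l$ for some $l \in \{1,\ldots,n\}$.

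The next step is a dimension count. Theorem~\ref{thm:jump_equality_dim_sing} applied to the same pair $(\Phi, H)$ gives $\dim_{(o,o)}\widetilde{A} = n + \dim_o(\widetilde{A} \cap H) = n + \dim_o B$, so $\dim_o B = n - l$. Since $\Phi \geq u(z)$ and $\Phi \geq v(w)$ pointwise on $\mathbb{C}^{2n}$, standard multiplier-ideal transfer yields $\mathcal{I}(cu) \subseteq \mathcal{I}(c\Phi)$ and $\mathcal{I}(cv) \subseteq \mathcal{I}(c\Phi)$ after pullback from $\mathbb{C}^n_z$ and $\mathbb{C}^n_w$, hence $\widetilde{A} \subseteq A_1 \times A_2$; this is precisely the ingredient behind Proposition~\ref{prop:add_dim_nonregular}, giving $\dim_o A_1 + \dim_o A_2 \geq n + \dim_o B$. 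On the other hand, the elementary dimension inequality $\dim_o(A_1 \cap A_2) \geq \dim_o A_1 + \dim_o A_2 - n$, together with the hypothesis $(A_1 \cap A_2, o) = (B, o)$, gives the reverse inequality $\dim_o A_1 + \dim_o A_2 \leq n + \dim_o B$. Equality thus holds, and $\widetilde{A} \subseteq A_1 \times A_2$ is an inclusion of germs of the same dimension.

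To conclude, $(A_1, o)$ and $(A_2, o)$ are irreducible by hypothesis, so $(A_1 \times A_2, (o,o))$ is irreducible as well; since $(\widetilde{A}, (o,o))$ is smooth --- hence irreducible --- of the same dimension and is contained in it, the two germs coincide: $(\widetilde{A}, (o,o)) = (A_1 \times A_2, (o,o))$. A product of analytic germs is regular iff both factors are, so $(A_1, o)$ and $(A_2, o)$ are both regular. The equality $\dim_o A_1 + \dim_o A_2 = n + \dim_o(A_1 \cap A_2)$ says $A_1$ and $A_2$ meet in proper dimension, and combined with the regularity of $(A_1 \cap A_2, o) = (B, o)$ this forces $T_{A_1, o} \cap T_{A_2, o} = T_{B, o}$, whence $\dim(T_{A_1, o} + T_{A_2, o}) = \dim_o A_1 + \dim_o A_2 - \dim_o B = n$. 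The main point needing care is the product construction --- the identities $c_{(o,o)}(\Phi) = c$ and $\widetilde{A} \cap H = B$ --- which are the $\max$-analogues of Fubini for lcts and jumping varieties and rest on Berndtsson's theorem; granted those, everything else is a dimension count plus the irreducibility/regularity comparison above.
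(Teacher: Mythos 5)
Your reduction to the product space $\mathbb{C}^{2n}$ with $H=\Delta$ and $\Phi=\max\{\pi_1^*u,\pi_2^*v\}$ is exactly the paper's reduction, and the middle part of the argument (dimension count, $\widetilde A\subseteq A_1\times A_2$, irreducibility of $A_1\times A_2$, hence $\widetilde A = A_1\times A_2$ and regularity of each factor) matches the paper's proof. The problem is the final step.

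You claim that the numerical equality $\dim_o A_1+\dim_o A_2 = n+\dim_o(A_1\cap A_2)$, together with the regularity of $(A_1\cap A_2,o)$, ``forces $T_{A_1,o}\cap T_{A_2,o}=T_{B,o}$.'' This is false as a general statement about regular irreducible germs. Take $A_1=\{y=0\}$ and $A_2=\{y=x^2\}$ in $\mathbb{C}^2$: both are regular and irreducible, $A_1\cap A_2=\{o\}$ is regular of dimension $0$, and $\dim A_1+\dim A_2 = 2 = n+\dim(A_1\cap A_2)$, so the set-theoretic intersection is ``proper''. Yet $T_{A_1,o}\cap T_{A_2,o}=\{y=0\}$ has dimension $1$, not $0$, and $\dim(T_{A_1,o}+T_{A_2,o})=1\neq 2$. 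Proper intersection plus regularity of the intersection does not yield tangential transversality.

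What saves the theorem is that condition~(2) of Theorem~\ref{thm:lct_slice_graph} asserts more than the regularity of $\widetilde A$: it produces coordinates of the \emph{form} $(w_1,\dots,w_k,z_{k+1},\dots,z_n)$ in which $\widetilde A=\{w_1=\dots=w_l=0\}$, and since $H=\{z_{k+1}=\dots=z_n=0\}$ is cut out by the remaining coordinates, the differentials $dw_1,\dots,dw_l,dz_{k+1},\dots,dz_n$ are independent at $(o,o)$ --- i.e.\ $\widetilde A$ is \emph{transverse} to $\Delta$, $\dim(T_{\widetilde A,(o,o)}+T_{\Delta,(o,o)})=2n$. You extract only ``$\widetilde A$ is regular'' from condition~(2) and discard the transversality. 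The paper instead keeps it, and then Remark~\ref{lem:linear} together with $\widetilde A=A_1\times A_2$ turns $2n=\dim(T_{A_1\times A_2,(o,o)}+T_{\Delta,(o,o)})$ directly into $\dim(T_{A_1,o}+T_{A_2,o})=n$. Replacing your last paragraph with this argument closes the gap; everything else in your write-up agrees with the paper.
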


\section{Some preparations}\label{sec:prepara}

In this section, we recall some known results and present some preparatory results
for the proofs of the main results and applications in the present article.

\subsection{Ohsawa-Takegoshi $L^{2}$ extension theorem}

We recall the famous Ohsawa-Takegoshi $L^2$ extension theorem as follows:

\begin{Theorem}
\label{t:ot_plane}(\cite{ohsawa-takegoshi}, see also \cite{ohsawa3,siu96,berndtsson,demailly99,siu00}, etc.)
Let $D$ be a bounded pseudo-convex domain in $\mathbb{C}^{n}$.
Let $u$ be a plurisubharmonic function on $D$.
Let $H$ be an $m$-dimensional complex plane in $\mathbb{C}^{n}$.
Then for any holomorphic function $f$ on $H\cap D$ satisfying
$$\int_{H\cap D}|f|^{2}e^{-2u}d\lambda_{H}<+\infty,$$
there exists a holomorphic function $F$ on $D$ satisfying $F|_{H\cap D}=f$,
and
$$\int_{D}|F|^{2}e^{-2u}d\lambda_{n}\leq C_{D}\int_{H\cap D}|f|^{2}e^{-2u}d\lambda_{H},$$
where $C_{D}$ only depends on the diameter of $D$ and $m$,
and $d\lambda_{H}$ is the Lebesgue measure on $H$.
\end{Theorem}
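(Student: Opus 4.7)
The plan is to prove the theorem by the twisted $\bar\partial$-method originated by Ohsawa and Takegoshi. A first reduction is to the codimension-one case: by iterating along a flag $H = H_m \subset H_{m+1} \subset \cdots \subset H_n = \mathbb{C}^{n}$ of complex affine subspaces and composing the resulting extensions, it suffices to treat $m = n - 1$, and a unitary change of coordinates lets us assume $H = \{z_n = 0\}$.

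Next I would construct an approximate extension by smooth cutoff. Pick $\chi \in C_c^{\infty}([0,1])$ equal to $1$ near $0$ and set $\tilde{f}(z',z_n) := f(z')\,\chi(|z_n|^2/\epsilon^2)$. Then $g := \bar\partial \tilde{f}$ is a $\bar\partial$-closed $(0,1)$-form supported in the thin shell $\{\epsilon \leq |z_n| \leq \sqrt{2}\,\epsilon\}$ carrying an explicit factor of $\bar z_n$. The extension will be produced as $F := \tilde{f} - v$, where $v$ solves $\bar\partial v = g$ on $D$ with an $L^{2}$-bound that is uniform in $\epsilon$ and is strong enough to force $v|_H = 0$ in the limit.

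The key estimate comes from a twisted Bochner--Kodaira--Nakano--H\"ormander inequality: for smooth positive auxiliary functions $\eta,\lambda$ on $D$ and any $w$ in the domain of $\bar\partial^{*}_{u}$,
$$\int_D (\eta + \lambda)\,|\bar\partial^{*}_{u} w|^2 e^{-2u} + \int_D \eta\,|\bar\partial w|^2 e^{-2u} \;\geq\; \int_D \bigl\langle (2\eta\, i\partial\bar\partial u - i\partial\bar\partial \eta)\,w,\,w\bigr\rangle e^{-2u} - \int_D \lambda^{-1}\,|\partial\eta|^2\,|w|^2 e^{-2u}.$$
Choosing $\eta,\lambda$ to depend only on $|z_n|^2$ --- for instance $\eta = A - \log(|z_n|^2 + \delta^2)$ with $A$ large enough that $\eta > 0$ on $D$ --- and tuning $\lambda$ so that the cross term is absorbed leads, via the standard duality argument, to a solution $v$ of $\bar\partial v = g$ satisfying
$$\int_D \frac{|v|^2}{|z_n|^2 + \delta^2}\,e^{-2u}\,d\lambda_n \;\leq\; C_D \int_{H\cap D} |f|^2 e^{-2u}\,d\lambda_H$$
uniformly in $\epsilon,\delta$. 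The singular weight on the left both guarantees $v|_H = 0$ and, after sending $\epsilon,\delta \to 0$ and extracting a weak $L^{2}$ limit, yields a holomorphic $F$ obeying the asserted inequality with a constant depending only on the diameter of $D$ and on $m$.

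The hard part will be the tuning of the pair $(\eta,\lambda)$: the curvature contribution $-i\partial\bar\partial \eta$ together with the non-negative $2\eta\,i\partial\bar\partial u$ must dominate the cross-term $\lambda^{-1}|\partial\eta|^2$ \emph{and} produce the singular weight $(|z_n|^2+\delta^2)^{-1}$ needed to force vanishing along $H$, while still keeping the resulting constant dependent only on the diameter of $D$ and on $m$. A secondary but necessary technical step is to approximate the plurisubharmonic weight $u$ by a decreasing sequence of smooth strictly plurisubharmonic functions on an exhaustion of $D$ by relatively compact pseudoconvex subdomains, apply the smooth case there, and pass to the weak limit using Banach--Alaoglu together with Fatou's lemma for the weight $e^{-2u}$.
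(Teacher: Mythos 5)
The paper does not prove this statement at all: it is quoted as the classical Ohsawa--Takegoshi $L^{2}$ extension theorem, with the proof delegated to \cite{ohsawa-takegoshi} and the other listed references. So the only question is whether your sketch would stand on its own, and as written it does not: you have reproduced the standard architecture of the known proof (reduction to codimension one, cutoff extension $\tilde f$, solving $\bar\partial v=g$ with a twisted H\"ormander estimate, singular weight $(|z_{n}|^{2}+\delta^{2})^{-1}$ to force $F|_{H}=f$), but the decisive step --- the explicit choice of the pair $(\eta,\lambda)$ and the verification that $2\eta\, i\partial\bar\partial u - i\partial\bar\partial\eta - \lambda^{-1} i\partial\eta\wedge\bar\partial\eta$ dominates both the pairing with the shell-supported data $g$ and the singular weight, uniformly in $\epsilon$ and $\delta$ --- is exactly what you defer as ``the hard part.'' That balance is the entire content of the theorem; with the naive choice $\eta=A-\log(|z_{n}|^{2}+\delta^{2})$ the curvature gain is only $\delta^{2}(|z_{n}|^{2}+\delta^{2})^{-2}$ while $|\partial\eta|^{2}=|z_{n}|^{2}(|z_{n}|^{2}+\delta^{2})^{-2}$, so the cross term is \emph{not} absorbed without the genuinely clever coupling of $\lambda$ to $\eta$ (as in Ohsawa--Takegoshi, Siu, Berndtsson, or McNeal--Varolin), and nothing in your text supplies it. Until that computation is carried out, the proposal is a plan for the known proof, not a proof.

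Two secondary points. First, the flag reduction composes $n-m$ codimension-one extensions, so the constant you obtain depends on the number of steps, i.e.\ on the codimension $n-m$ (hence on $n$), not only on the diameter and $m$; this matches the literature but should be stated honestly relative to the form of the constant you claim. Second, minor slips: $\bar\partial\chi(|z_{n}|^{2}/\epsilon^{2})$ carries a factor $z_{n}\,d\bar z_{n}$, not $\bar z_{n}$; the naive definition $\tilde f(z',z_{n})=f(z')\chi(|z_{n}|^{2}/\epsilon^{2})$ need not make sense on all of $D\cap\{|z_{n}|<\epsilon\}$ unless one first shrinks to relatively compact subdomains (your exhaustion step should be invoked here too, not only for smoothing $u$); and the conclusion $F|_{H}=f$ should be drawn from the finiteness of $\int|F-\tilde f|^{2}(|z_{n}|^{2})^{-1}$ for the holomorphic difference on slices, since $v$ itself is only an $L^{2}$ limit. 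None of these are fatal, but the missing core estimate is.
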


The optimal estimates of generalized versions of Theorem \ref{t:ot_plane}
could be referred to \cite{guan-zhou12,guan-zhou13p,guan-zhou13ap}.

Following the symbols in Theorem \ref{t:ot_plane}, there is a local version of Theorem \ref{t:ot_plane}

\begin{Remark}
\label{rem:ot_plane}(see \cite{ohsawa-takegoshi}, see also \cite{D-K01})
For any germ of holomorphic function $f$ on $o\in H\cap D$ satisfying
$|f|^{2}e^{-2u|_{H}}$ is locally integrable near $o$,
there exists a germ of holomorphic function $F$ on $o\in D$ satisfying $F|_{H\cap D}=f$,
and $|F|^{2}e^{-2u}$ is locally integrable near $o$.
\end{Remark}

\subsection{Berndtsson's solution of the openness conjecture}

Let $(z_{1},\cdots,z_{k})$ be the coordinates of
$\mathbb{B}^{k-l}\times\mathbb{B}^{l}\subseteq\mathbb{C}^{k}$,
and let
$p:\mathbb{B}^{k-l}\times\mathbb{B}^{l}\to\mathbb{B}^{k-l}$.
Let $H_{1}:=\{z_{k-l+1}=\cdots=z_{k}=0\}$.
In \cite{GZjump-equ},
by combining with Theorem \ref{t:ot_plane},
Berndtsson's solution of the openness conjecture \cite{berndtsson13}
and Berndtsson's log-plurisubharmonicity of
relative Bergman kernels \cite{bern_bergman},
it has been presented
\begin{Remark}
\label{rem:lct_slice_open}\cite{GZjump-equ}
Let $u$ be a plurisubharmonic function on $\mathbb{B}^{k-l}\times\mathbb{B}^{l}\subseteq\mathbb{C}^{k}$.
Then there exists $c\in(0,+\infty]$,
such that $c_{z_{a}}(u|_{L_{a}})=c$ for almost every
$a=(a_{1},\cdots,a_{k-l})\in\mathbb{B}^{k-l}$
with respect to the Lebesgue measure on $\mathbb{C}^{k-l}$,
where $L_{a}=\{z_{1}=a_{1},\cdots,z_{k-l}=a_{k-l}\}$
and $z_{a}\in L_{a}\cap H_{1}$.
\end{Remark}

We present a corollary of Remark \ref{rem:lct_slice_open} as follows

\begin{Corollary}
\label{coro:lct_slice_open}
Let $u$ be a plurisubharmonic function on $\mathbb{B}^{k-l}\times\mathbb{B}^{l}$.
Assume that $c_{z}(u)\leq 1$ for any $z\in H_{1}$ and $c_{o}(u)=1$.
Then for almost every
$a=(a_{1},\cdots,a_{k-l})\in\mathbb{B}^{k-l}$
with respect to the Lebesgue measure on $\mathbb{C}^{k-l}$, $c_{z_{a}}(u|_{L_{a}})=1$ holds,
where $L_{a}$
and $z_{a}$ are as in Remark \ref{rem:lct_slice_open}.
\end{Corollary}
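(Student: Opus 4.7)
The plan is to use Remark~\ref{rem:lct_slice_open} to immediately produce a constant $c\in(0,+\infty]$ with $c_{z_{a}}(u|_{L_{a}})=c$ for almost every $a\in\mathbb{B}^{k-l}$, and then to pin down $c=1$ by a two-sided squeeze: the restriction formula will deliver $c\le 1$, and a Fubini/Tonelli argument combined with the hypothesis $c_{o}(u)=1$ will deliver $c\ge 1$.

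For the upper bound, I would apply Proposition~\ref{prop:DK2000} on each affine $l$-plane $L_{a}$ (for those $a$ with $u|_{L_{a}}\not\equiv-\infty$), obtaining $c_{z_{a}}(u|_{L_{a}})\le c_{z_{a}}(u)$. Since $z_{a}=(a_{1},\dots,a_{k-l},0,\dots,0)$ lies in $H_{1}$ by construction, the standing hypothesis $c_{z}(u)\le 1$ for $z\in H_{1}$ forces $c_{z_{a}}(u|_{L_{a}})\le 1$ for almost every $a$, so $c\le 1$.

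For the lower bound, the idea is to argue by contradiction: suppose $c<1$ and fix any $\tilde c\in(c,1]$. I would choose a product neighborhood $U=B_{1}\times B_{2}\subseteq\mathbb{B}^{k-l}\times\mathbb{B}^{l}$ of the origin, so that for every $a\in B_{1}$ the slice $L_{a}\cap U=\{a\}\times B_{2}$ is an open neighborhood of $z_{a}$ in $L_{a}$ (because $z_{a}$ corresponds to the origin of $L_{a}$ in these coordinates). For almost every $a\in B_{1}$ we have $c_{z_{a}}(u|_{L_{a}})=c<\tilde c$, so by definition of the singularity exponent $\int_{L_{a}\cap U}e^{-2\tilde c\,u}\,d\lambda_{l}=+\infty$. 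Tonelli's theorem applied to the non-negative integrand $e^{-2\tilde c\,u}$ then yields $\int_{U}e^{-2\tilde c\,u}\,d\lambda_{k}=+\infty$, hence $c_{o}(u)\le\tilde c$. Letting $\tilde c\downarrow c$ gives $c_{o}(u)\le c<1$, contradicting $c_{o}(u)=1$.

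The only subtle point I anticipate is the bookkeeping needed to guarantee that $B_{2}$ is small enough for $L_{a}\cap U$ to be a genuine neighborhood of $z_{a}$ on which the slice integral diverges; this is handled by shrinking $B_{1},B_{2}$ around the origin and using that the divergence of $\int e^{-2\tilde c\,u|_{L_{a}}}$ is a local property at $z_{a}$. Beyond that, the proof is a direct combination of Remark~\ref{rem:lct_slice_open}, Proposition~\ref{prop:DK2000}, and Tonelli's theorem, with no further input required.
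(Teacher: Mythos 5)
Your proposal is correct and follows essentially the same route as the paper: invoke Remark \ref{rem:lct_slice_open} to get the a.e.\ constant $c$, use Proposition \ref{prop:DK2000} together with $c_{z}(u)\leq 1$ on $H_{1}$ for the bound $c\leq 1$, and use $c_{o}(u)=1$ via a Fubini/Tonelli slicing argument (the paper's parenthetical ``integrability of $e^{-2pu}$ near $o$, $p<1$'') for $c\geq 1$. The only point to make explicit is that the Tonelli step must be run on arbitrarily small product neighborhoods of $o$ (not a single fixed $U$) to conclude $c_{o}(u)\leq\tilde{c}$, which is exactly the shrinking you already indicate.
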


\begin{proof}
By Remark \ref{rem:lct_slice_open} and $c_{o}(u)=1$,
it follows that $c\geq 1$ (consider the integrability of $e^{-2pu}$ near $o$, where $p<1$ near $1$, and by contradiction).

By $c_{z}(u)\leq 1$ for any $z\in H_{1}$ and Proposition \ref{prop:DK2000},
it follows that $c_{z_{a}}(u|_{L_{a}})\leq c_{z_{a}}(u)\leq1$ for any $z_{a}\in L_{a}\cap H_{1}$.
Combining $c\geq 1$, we obtain Corollary \ref{coro:lct_slice_open}.
\end{proof}

The following remark is the singular version of Corollary \ref{coro:lct_slice_open}:

\begin{Remark}
\label{rem:lct_slice_sing}
Let $A_{3}$ be a reduced irreducible analytic subvariety on $\mathbb{B}^{k-l}\times\mathbb{B}^{l}$ through $o$
satisfying $\dim_{o}A_{3}=k-l$
such that

$(1)$ for any $a=(a_{1},\cdots,a_{k-l})\in\mathbb{B}^{k-l}$,
$A_{3}\cap L_{a}\neq\emptyset$, where $L_{a}=\{z_{1}=a_{1},\cdots,z_{k-l}=a_{k-l}\}$;

$(2)$ there exists analytic subset $A_{4}\subseteq\mathbb{B}^{k-l}$
such that any $z\in (A_{3}\cap p^{-1}(\mathbb{B}^{k-l}\setminus A_{4}))$
is the regular point in $A_{3}$ and the noncritical point of $p|_{A_{3,reg}}$.

Let $u$ be a plurisubharmonic function on $\mathbb{B}^{k-l}\times\mathbb{B}^{l}$.
Assume that $c_{z}(u)\leq 1$ for any $z\in A_{3}$ and $c_{o}(u)=1$.
Then for almost every
$a=(a_{1},\cdots,a_{k-l})\in\mathbb{B}^{k-l}$
with respect to the Lebesgue measure on $\mathbb{C}^{k-l}$,
there exists $z_{a}\in A_{3}\cap L_{a}$
such that
equality $c_{z_{a}}(u|_{L_{a}})=1$.
\end{Remark}

\begin{proof}
By Remark \ref{rem:lct_slice_open},
it follows that there exists $c\in(0,\infty]$.
such that $c_{z}(u|_{L_{p(z)}})=c$
for almost every $z\in (A_{3}\cap p^{-1}(\mathbb{B}^{k-l}\setminus A_{4}))$
with respect to the Lebesgue measure
in $(A_{3}\cap p^{-1}(\mathbb{B}^{k-l}\setminus A_{4}))$.
By $c_{o}(u)=1$,
it follows that $c\geq 1$
(consider the integrability of $e^{-2pu}$ near $o$, where $p<1$ near $1$,
and by contradiction).

By Proposition \ref{prop:DK2000},
it follows that $1\leq c\leq c_{z}(u|_{L_{p(z)}})\leq c_{z}(u)\leq 1$ holds,
for almost every $z\in (A_{3}\cap p^{-1}(\mathbb{B}^{k-l}\setminus A_{4}))$.
Then one can find $z_{3}\in (A_{3}\cap p^{-1}(\mathbb{B}^{k-l}\setminus A_{4}))$
such that $c_{z_{3}}(u)=c_{z_{3}}(u|_{L_{p(z_{3})}})=1$.

By Corollary \ref{coro:lct_slice_open} $(o\sim z_{3})$,
it follows that
for almost every
$a=(a_{1},\cdots,a_{k-l})\in(\mathbb{B}^{k-l}\setminus A_{4})$
with respect to the Lebesgue measure on $\mathbb{C}^{k-l}$,
there exists $z_{a}\in A_{3}\cap L_{a}$
such that
equality $c_{z_{a}}(u|_{L_{a}})=1$.
As the Lebesgues measure of $A_{4}$ on $\mathbb{C}^{k-l}$ is zero,
then we obtain Remark \ref{rem:lct_slice_sing}.
\end{proof}

\subsection{Hilbert's Nullstellensatz (complex situation)}

It is well-known that the complex situation of Hilbert's Nullstellensatz is as follows (see (4.22) in \cite{demailly-book})

\begin{Theorem}
\label{thm:hilbert}(see \cite{demailly-book})
For every ideal $I\subset \mathcal{O}_{o}$,
$\mathcal{J}_{V(I),o}=\sqrt{I},$
where $\sqrt{I}$ is the radical of $I$, i.e. the set of germs $f\in\mathcal{O}_{o}$ such that some power $f^{k}$ lies in $I$.
\end{Theorem}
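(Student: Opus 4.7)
The plan is to prove the two inclusions separately, with essentially all of the work in one direction. The easy direction $\sqrt{I}\subseteq\mathcal{J}_{V(I),o}$ is immediate from definitions: if $f^{k}\in I$, then $f^{k}$ vanishes pointwise on $V(I)$, hence so does $f$. So the content is the reverse inclusion: a germ $f\in\mathcal{O}_{o}$ vanishing identically on the germ $V(I)$ must satisfy $f^{k}\in I$ for some $k\ge 1$.

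First, I would reduce to the case where $I$ is prime. Because $\mathcal{O}_{o}$ is Noetherian (R\"uckert's basis theorem for convergent power series), $I$ admits a primary decomposition $I=\bigcap_{i=1}^{N}Q_{i}$ with associated primes $\mathfrak{p}_{i}=\sqrt{Q_{i}}$, giving $\sqrt{I}=\bigcap_{i}\mathfrak{p}_{i}$. Geometrically this corresponds to the decomposition $V(I)=\bigcup_{i}V(\mathfrak{p}_{i})$ into irreducible analytic germs. If $f$ vanishes on $V(I)$, it vanishes on each $V(\mathfrak{p}_{i})$; so it suffices to prove that whenever $\mathfrak{p}$ is prime and $f|_{V(\mathfrak{p})}\equiv 0$ we have $f\in\mathfrak{p}$.

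Next, I would apply the local parametrization theorem for irreducible analytic germs. After a generic linear change of coordinates, writing $z=(z',z'')$ with $d=\dim V(\mathfrak{p})$, the projection $\pi: V(\mathfrak{p})\to(\mathbb{C}^{d},0)$, $z\mapsto z'$, is a finite branched covering of some degree $s$, \'etale over the complement of a proper analytic subset $\Delta\subset(\mathbb{C}^{d},0)$. Consequently every $g\in\mathcal{O}_{V(\mathfrak{p}),o}$ is integral over $\mathcal{O}_{\mathbb{C}^{d},0}$ and satisfies a monic relation $g^{s}+a_{1}(z')g^{s-1}+\cdots+a_{s}(z')=0$, where the $a_{j}\in\mathcal{O}_{\mathbb{C}^{d},0}$ are the elementary symmetric functions of the values of $g$ on the $s$ sheets of $\pi$. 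Applied to the class of $f$ in $\mathcal{O}_{o}/\mathfrak{p}$, the hypothesis $f|_{V(\mathfrak{p})}=0$ forces every $a_{j}$ to vanish on $(\mathbb{C}^{d},0)\setminus\Delta$; by the identity principle each $a_{j}$ is the zero germ, so the relation collapses to $f^{s}\equiv 0$ on $V(\mathfrak{p})$, i.e.\ $f^{s}\in\mathfrak{p}$.

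The main obstacle is the local parametrization theorem itself: one must arrange, via iterated Weierstrass preparation on a suitable system of generators of $\mathfrak{p}$, that $\mathcal{O}_{o}/\mathfrak{p}$ becomes a finite integral extension of $\mathcal{O}_{\mathbb{C}^{d},0}$, and then analytically continue the sheets to get a genuine branched cover of constant degree. Once this structural input is in hand the closing symmetric-function argument is routine. I would follow the presentation in \textbf{(4.22)} of Demailly's book cited alongside the statement.
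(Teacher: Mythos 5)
The paper itself does not prove this theorem; it quotes it from (4.22) of Demailly's book, so your attempt has to be measured against the standard argument given there. Your skeleton (easy inclusion, reduction to a prime $\mathfrak{p}$ via Noetherianity and primary decomposition, then the local parametrization theorem exhibiting $V(\mathfrak{p})$ as a degree-$s$ branched cover $\pi$ over $(\mathbb{C}^{d},0)$) is the right one, but there is a genuine gap at the decisive step. The monic relation whose coefficients $a_{j}$ are the elementary symmetric functions of the values of $f$ on the sheets is, by its very construction, only an identity of functions on $V(\mathfrak{p})$: it tells you that $f^{s}+(a_{1}\circ\pi)f^{s-1}+\cdots+(a_{s}\circ\pi)$ vanishes on $V(\mathfrak{p})$, i.e.\ lies in $\mathcal{J}_{V(\mathfrak{p}),o}$, not that it lies in $\mathfrak{p}$. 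So after you observe that all $a_{j}$ vanish, what you are left with is only ``$f^{s}$ vanishes on $V(\mathfrak{p})$'', which is trivially true, and your closing ``i.e.\ $f^{s}\in\mathfrak{p}$'' is exactly the Nullstellensatz for the prime $\mathfrak{p}$ --- the statement being proved. The argument is circular at its crucial point. (If instead you intend the relation to hold in $\mathcal{O}_{o}/\mathfrak{p}$, then the identification of its coefficients with the sheet-wise symmetric functions is not something the parametrization theorem hands you for free; proving that membership is essentially as hard as the theorem itself.)

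The repair, as in Demailly's (4.22), is to produce the integral relation algebraically modulo $\mathfrak{p}$ and use the geometry only to kill coefficients. The parametrization theorem makes $A=\mathcal{O}_{o}/\mathfrak{p}$ a finite integral extension of $\mathcal{O}_{d}=\mathcal{O}_{\mathbb{C}^{d},0}$; let $P(T)=T^{s}+a_{1}T^{s-1}+\cdots+a_{s}$ be the minimal polynomial of the class $\bar{f}$ over the fraction field of $\mathcal{O}_{d}$. Its coefficients lie in $\mathcal{O}_{d}$ because $\mathcal{O}_{d}$ is integrally closed, and $P(\bar{f})=0$ in $A$ means $f^{s}+(a_{1}\circ\pi)f^{s-1}+\cdots+(a_{s}\circ\pi)\in\mathfrak{p}$ --- a genuine membership in $\mathfrak{p}$. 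Since elements of $\mathfrak{p}$ vanish on $V(\mathfrak{p})$ (the easy direction), evaluating at the points of a fiber $\pi^{-1}(z')$ with $z'\notin\Delta$, where $f=0$ by hypothesis, gives $a_{s}(z')=0$; as $\pi(V(\mathfrak{p}))$ fills a neighborhood of $0$, $a_{s}\equiv0$, and irreducibility of the minimal polynomial then forces $s=1$, $P(T)=T$, hence $\bar{f}=0$, i.e.\ $f\in\mathfrak{p}$. This even gives $\mathcal{J}_{V(\mathfrak{p}),o}=\mathfrak{p}$ exactly, after which your primary-decomposition reduction (which is fine) yields the radical statement for general $I$. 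An equivalent repair uses the primitive element $\bar{z}_{d+1}$ and the discriminant $\delta$, writing $\delta^{m}\bar{f}$ as a polynomial in $\bar{z}_{d+1}$ of degree less than the covering degree and showing that polynomial vanishes; either way, the missing ingredient is an identity modulo $\mathfrak{p}$ rather than an identity of functions on $V(\mathfrak{p})$.
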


\subsection{A consequence of Demailly's strong openness conjecture}

In \cite{GZopen-lelong}, inspired by the proof of Demailly's equisingular approximation theorem
(see Theorem 15.3 in \cite{demailly2010}) and using the
strong openness property $\mathcal{I}(u)=\cup_{\varepsilon>0}\mathcal{I}((1+\varepsilon)u)$ \cite{GZopen-a,GZopen-b,GZopen-c},
the following observation has been presented:

\begin{Proposition}
\label{Pro:GZ1005}(\cite{GZopen-lelong}, see also \cite{GZjump-equ})
Let $D$ be a bounded domain in $\mathbb{C}^{n}$, and $o\in D$.
Let $u$ be a plurisubharmonic function on $D$.
Let $J\subseteq\mathcal{I}(u)_{o}$.
Then there exists $p>0$ large enough, such that the plurisubharmonic function
$\tilde{u}=\max\{u,p\log|J|\}$ on a small enough neighborhood $V_{o}$ of $o$ satisfying that
$e^{-2u}-e^{-2\tilde{u}}$ is integrable.
\end{Proposition}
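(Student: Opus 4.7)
The plan is to split the integration domain into the two pieces $\{u\geq p\log|J|\}$ and $\{u<p\log|J|\}$ coming from the definition of $\tilde u=\max\{u,p\log|J|\}$. On the first piece $\tilde u=u$ and the difference $e^{-2u}-e^{-2\tilde u}$ vanishes identically, so the whole problem reduces to estimating the integral of $e^{-2u}-|J|^{-2p}$ on the second piece; since $|J|^{-2p}\geq 0$, it suffices to bound $\int_{\{u<p\log|J|\}}e^{-2u}\,d\lambda$.

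The key input will be the strong openness property. After shrinking to a small neighborhood $V_o$ of $o$ on which $|J|\leq 1$, the hypothesis $J\subseteq\mathcal{I}(u)_o$ combined with strong openness produces an $\varepsilon>0$ with $|J|^{2}e^{-2(1+\varepsilon)u}\in L^1(V_o)$. I will then choose $p$ large enough that $\varepsilon p\geq 1$ and exploit the defining inequality $u<p\log|J|$, exponentiated as $e^{2\varepsilon u}<|J|^{2\varepsilon p}\leq|J|^{2}$ (the last step using $|J|\leq 1$ and $\varepsilon p\geq 1$). Multiplying through by $e^{-2(1+\varepsilon)u}$ yields the pointwise majoration
\[
e^{-2u}\leq|J|^{2}e^{-2(1+\varepsilon)u}\quad\text{on }\{u<p\log|J|\}\cap V_o,
\]
after which integrating gives
\[
\int_{V_o}\bigl(e^{-2u}-e^{-2\tilde u}\bigr)\,d\lambda\leq\int_{V_o}|J|^{2}e^{-2(1+\varepsilon)u}\,d\lambda<+\infty.
\]
Plurisubharmonicity of $\tilde u$ is automatic as the maximum of two plurisubharmonic functions.

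The only nontrivial obstacle is the extraction of $\varepsilon>0$: everything downstream is an elementary pointwise comparison, but the passage from borderline local integrability $|J|^{2}e^{-2u}\in L^{1}_{\mathrm{loc}}$ to the strictly better $|J|^{2}e^{-2(1+\varepsilon)u}\in L^{1}_{\mathrm{loc}}$ is precisely the content of Demailly's strong openness conjecture, solved in \cite{GZopen-a,GZopen-b,GZopen-c}. Once $\varepsilon$ is in hand, fixing $p\geq 1/\varepsilon$ and shrinking $V_o$ so that $|J|\leq 1$ are routine bookkeeping.
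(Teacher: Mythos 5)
Your proof is correct: the decomposition into $\{u\geq p\log|J|\}$ (where the integrand vanishes) and $\{u<p\log|J|\}$, the use of strong openness to upgrade $J\subseteq\mathcal{I}(u)_{o}$ to $|J|^{2}e^{-2(1+\varepsilon)u}\in L^{1}_{\mathrm{loc}}$ near $o$ (legitimate since $J$ is finitely generated, so a single $\varepsilon>0$ works for all generators), and the pointwise comparison $e^{-2u}\leq|J|^{2}e^{-2(1+\varepsilon)u}$ on $\{u<p\log|J|\}$ once $p\geq 1/\varepsilon$ and $|J|\leq 1$ is exactly the intended argument. The present paper does not reprove this proposition but quotes it from \cite{GZopen-lelong}, where the proof is precisely this combination of the strong openness property with the comparison device from Demailly's equisingular approximation theorem, so your route coincides with the original one.
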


We recall some direct consequences of Proposition \ref{Pro:GZ1005} as follows
\begin{Remark}
\label{rem:GZ1005}(\cite{GZopen-lelong}, see also \cite{GZjump-equ})
$\tilde{u}$ (as in Proposition \ref{Pro:GZ1005}) satisfies:

$(1)$ for any $z\in(\{z|c_{z}(u)\leq1\},o)=(V(\mathcal{I}(u)),o)$, inequality $c_{z}(u)\leq c_{z}(\tilde{u})\leq 1$ holds;

$(2)$ if $c_{z_{0}}(u)=1$, then $c_{z_{0}}(\tilde{u})=1$, where $z_{0}\in(\{z|c_{z}(u)\leq1\},o)$.
\end{Remark}

Let $I\subseteq\mathcal{O}_{o}$ be a coherent ideal,
and let $u$ be a plurisubharmonic function near $o$.
Using Proposition \ref{Pro:GZ1005},
we present the following result about the integrability of the ideals related to weight of jumping number one.

\begin{Proposition}
\label{thm:jump_supp}
Let $J\subseteq\mathcal{O}_{o}$ be a coherent ideal.
Assume that $c_{o}^{I}(u)=1$.
If $(V(\mathcal{I}(u)),o)\subseteq(V(J),o)$,
then $|I|^{2}|J|^{2\varepsilon}e^{-2u}$ is locally integrable near $o$ for any $\varepsilon>0$.
\end{Proposition}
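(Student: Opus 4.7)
The proof naturally splits into an algebraic reduction, a strong-openness enhancement, and a Hölder interpolation, and my plan is to execute these three steps in order.

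First I would translate the geometric hypothesis algebraically via Hilbert's Nullstellensatz (Theorem \ref{thm:hilbert}): the inclusion $(V(\mathcal{I}(u)),o)\subseteq(V(J),o)$ forces $\sqrt{J}\subseteq\sqrt{\mathcal{I}(u)_{o}}$, hence $J^{N}\subseteq\mathcal{I}(u)_{o}$ for some integer $N\geq 1$. Writing $g_{1},\dots,g_{k}$ for generators of $J$, the containment $g_{i}^{N}\in\mathcal{I}(u)_{o}$ gives local integrability of $|g_{i}|^{2N}e^{-2u}$ near $o$, and since $|J|^{2N}\leq C\sum_{i}|g_{i}|^{2N}$, also of $|J|^{2N}e^{-2u}$. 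Applying the strong openness property $\mathcal{I}(u)=\bigcup_{\mu>0}\mathcal{I}((1+\mu)u)$ to each $g_{i}^{N}$ and taking the minimum of the resulting margins, I obtain a uniform $\eta>0$ with $|J|^{2N}e^{-2(1+\eta)u}$ locally integrable near $o$. On the other side, the hypothesis $c_{o}^{I}(u)=1$ directly supplies local integrability of $|I|^{2}e^{-2(1-\delta)u}$ for every $\delta\in(0,1)$.

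The heart of the proof is a Hölder decomposition that trades these two facts against each other. Fix $\varepsilon>0$, choose $\eta_{0}>0$ small enough that $\varepsilon(1+1/\eta_{0})\geq N$, and set $\delta:=\eta\,\eta_{0}$. With conjugate exponents $p=1+\eta_{0}$ and $q=(1+\eta_{0})/\eta_{0}$, factor the integrand as
\[
|I|^{2}|J|^{2\varepsilon}e^{-2u}=A\cdot B,\qquad A^{p}=|I|^{2}e^{-2(1-\delta)u}.
\]
A short calculation, using the identity $q(1-(1-\delta)/p)=1+\delta/\eta_{0}=1+\eta$, gives $B^{q}=|I|^{2}|J|^{2\varepsilon q}e^{-2(1+\eta)u}$. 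This is bounded by a constant multiple of $|J|^{2N}e^{-2(1+\eta)u}$, because $|I|^{2}$ and $|J|^{2\varepsilon q-2N}$ are locally bounded (the latter by our choice $\varepsilon q\geq N$). Thus both $A^{p}$ and $B^{q}$ lie in $L^{1}_{\mathrm{loc}}$ near $o$, and Hölder's inequality yields the desired integrability of $|I|^{2}|J|^{2\varepsilon}e^{-2u}$.

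The principal obstacle is the coordinated tuning of the two parameters: $\eta_{0}$ must be chosen small enough (depending on $\varepsilon$ and $N$) to force $\varepsilon q\geq N$, and then $\delta$ must be fixed precisely as $\eta_{0}\eta$ so that the exponent on $e^{-2u}$ in $B^{q}$ lines up exactly with the openness margin $1+\eta$ produced in the first step. Once the compatibility of these two choices is verified for arbitrary $\varepsilon>0$, the argument goes through without any additional appeal to Proposition \ref{Pro:GZ1005} beyond the strong openness it already embodies.
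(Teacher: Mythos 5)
Your proposal is correct, and after the shared first step it follows a genuinely different route from the paper. The reduction via Hilbert's Nullstellensatz (Theorem \ref{thm:hilbert}) to $J^{N}\subseteq\mathcal{I}(u)_{o}$ is exactly how the paper begins, but from there the paper invokes Proposition \ref{Pro:GZ1005} to replace the weight $u$ by $\tilde{u}=\max\{u,p_{0}\log|J|\}$ up to an integrable error, and then argues by contradiction: if $|I|^{2}|J|^{2\varepsilon_{0}}e^{-2\tilde{u}}$ were not integrable, the pointwise bound $\varepsilon_{0}\log|J|\leq\frac{\varepsilon_{0}}{p_{0}}\tilde{u}$ together with $u\leq\tilde{u}$ would make $|I|^{2}e^{-2(1-\varepsilon_{0}/p_{0})u}$ non-integrable, contradicting $c_{o}^{I}(u)=1$; the factor $|J|^{2\varepsilon}$ is thus absorbed into the modified weight rather than handled by interpolation. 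You instead apply the strong openness property directly to the finitely many germs $g_{i}^{N}\in\mathcal{I}(u)_{o}$ to produce a uniform margin $\eta>0$ with $|J|^{2N}e^{-2(1+\eta)u}\in L^{1}_{\mathrm{loc}}$ near $o$, and then interpolate by H\"older between this and the sub-critical integrability $|I|^{2}e^{-2(1-\delta)u}\in L^{1}_{\mathrm{loc}}$ coming from $c_{o}^{I}(u)=1$; your exponent bookkeeping is right (indeed $q\bigl(1-(1-\delta)/p\bigr)=1+\delta/\eta_{0}=1+\eta$ and $\varepsilon q\geq N$), and the points you leave implicit --- normalizing $u$ to be bounded above so that passing to the minimum margin $\eta=\min_{i}\mu_{i}$ is harmless, and the a.e.\ division by $A$ in the factorization --- are routine. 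What the paper's route buys is economy within the article: it reuses the max-weight construction $\tilde{u}$ that drives the proofs of the main theorems, and it needs no choice of conjugate exponents. What your route buys is a direct, contradiction-free argument that uses only the bare strong openness statement $\mathcal{I}(u)=\cup_{\mu>0}\mathcal{I}((1+\mu)u)$ applied to finitely many germs, rather than the finer consequence recorded in Proposition \ref{Pro:GZ1005} about integrability of $e^{-2u}-e^{-2\tilde{u}}$.
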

After the present article has been written,
Demailly kindly shared his manuscript \cite{demailly2015} with the author addresses,
which includes Proposition \ref{thm:jump_supp} (Lemma (4.2) in \cite{demailly2015}).

\begin{proof}(proof of Proposition \ref{thm:jump_supp})
Let $J_{0}\subseteq\mathcal{O}_{o}$ be a coherent ideal satisfying
$(V(J_{0}),o)\supseteq(V(\mathcal{I}(u)),o).$
By Theorem \ref{thm:hilbert} $(I\sim \mathcal{I}(u)_{o})$,
it follows that there exists large enough positive integer $N$
such that $J^{N}_{0}\subseteq\mathcal{I}(u)_{o}$.

By Proposition \ref{Pro:GZ1005},
it follows that exist $p_{0}>0$ large enough
such that
$e^{-2u}-e^{-2\max\{u,p_{0}\log|J_{0}|\}}$
is locally integrable near $o$.

It suffices to prove that
$|I|^{2}|J_{0}|^{2\varepsilon}e^{-2\max\{u,p_{0}\log|J_{0}|\}}$ is locally integrable near $o$ for small enough $\varepsilon>0$.
We prove the above statement by contradiction:
If not, then there exists $\varepsilon_{0}>0$,
such that $|I|^{2}|J_{0}|^{2\varepsilon_{0}}e^{-2\max\{u,p_{0}\log|J_{0}|\}}$ is not locally integrable near $o$.
Note that $\varepsilon_{0}\log|J_{0}|\leq \frac{\varepsilon_{0}}{p_{0}}\max\{u,p_{0}\log|J_{0}|\}$,
then it follows that
$|I|^{2}e^{-2(1-\frac{\varepsilon_{0}}{p_{0}})\max\{u,p_{0}\log|J_{0}|\}}$ is not locally integrable near $o$.
Note that $u\leq\max\{u,p_{0}\log|J_{0}|\}$,
then it follows that $|I|^{2}e^{-2(1-\frac{\varepsilon_{0}}{p_{0}})u}$ is not locally integrable near $o$,
which contradicts $c_{o}^{I}(u)=1$.
Then we prove Proposition \ref{thm:jump_supp}.
\end{proof}
Let $I=\mathcal{O}_{o}$, $\varepsilon=1$.
Using Proposition \ref{thm:jump_supp},
we obtain the following result.

\begin{Corollary}
\label{coro:jump_supp}Let $J\subseteq\mathcal{O}_{o}$ be a coherent ideal.
Assume that $c_{o}(u)=1$.
Then the following two statements are equivalent\\
$(1)$ $(V(J),o)\supseteq(V(\mathcal{I}(u)),o)$;\\
$(2)$ $|J|^{2}e^{-2u}$ is locally integrable near $o$, i.e. $J\subseteq \mathcal{I}(u)_{o}$.
\end{Corollary}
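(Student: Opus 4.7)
The plan is to establish $(1)\Leftrightarrow(2)$ by handling the two implications separately, with one direction elementary and the other a direct specialization of Proposition \ref{thm:jump_supp}.

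The implication $(2)\Rightarrow(1)$ I would dispose of first; it holds without using the hypothesis $c_o(u)=1$. Indeed, if $J\subseteq\mathcal{I}(u)_o$ as germs of coherent ideals, then every germ in $J$ also lies in $\mathcal{I}(u)_o$, so every common zero of $\mathcal{I}(u)$ is automatically a common zero of germs in $J$, giving $(V(J),o)\supseteq(V(\mathcal{I}(u)),o)$.

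For the main direction $(1)\Rightarrow(2)$, the plan is to apply Proposition \ref{thm:jump_supp} with the specific choices $I=\mathcal{O}_o$ and $\varepsilon=1$. The hypothesis $c_o^{\mathcal{O}_o}(u)=c_o(u)=1$ of Proposition \ref{thm:jump_supp} is exactly the assumption of the corollary, the containment $(V(\mathcal{I}(u)),o)\subseteq(V(J),o)$ is exactly statement (1), and $|\mathcal{O}_o|$ is bounded above and below near $o$ (take $1$ as a generator). Proposition \ref{thm:jump_supp} then delivers that $|J|^{2}e^{-2u}$ is locally integrable near $o$, which is precisely statement (2).

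The conceptual obstacle, which is where all the work actually sits, is that statement (1) is a priori only a zero-set containment: by Hilbert's Nullstellensatz (Theorem \ref{thm:hilbert}) it merely yields $\sqrt{J}\subseteq\sqrt{\mathcal{I}(u)_o}$, and hence $J^{N}\subseteq\mathcal{I}(u)_o$ for some large $N$, rather than the sharp $J\subseteq\mathcal{I}(u)_o$. The point that at jumping number exactly one the weak zero-set containment already promotes to the ideal containment is precisely the content packaged into Proposition \ref{thm:jump_supp} through Proposition \ref{Pro:GZ1005} and, ultimately, the strong openness property $\mathcal{I}(u)=\cup_{\varepsilon>0}\mathcal{I}((1+\varepsilon)u)$. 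Once that input is taken on board, the corollary reduces to a one-line specialization as above.
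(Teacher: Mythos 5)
Your proposal is correct and is essentially the paper's own argument: the paper deduces the corollary precisely by specializing Proposition \ref{thm:jump_supp} to $I=\mathcal{O}_{o}$ and $\varepsilon=1$ for the implication $(1)\Rightarrow(2)$, the converse being the elementary fact that an ideal containment reverses the containment of zero sets. Your added remark that the real content (promoting the Nullstellensatz-type containment $J^{N}\subseteq\mathcal{I}(u)_{o}$ to $J\subseteq\mathcal{I}(u)_{o}$ at jumping number one) lives inside Proposition \ref{thm:jump_supp} via Proposition \ref{Pro:GZ1005} and strong openness matches the paper's structure exactly.
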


\subsection{Product spaces}

Let $\pi_{i}:=\Omega_{1}\times\Omega_{2}\to\Omega_{i}$
$i=1,2$  be the projections,
where $\Omega_{i}\subset\mathbb{C}^{n}$ and containing the origin $o\in\mathbb{C}^{n}$ for any $i$.
Let $\Delta$ be the diagonal of $\mathbb{C}^{n}\times\mathbb{C}^{n}$.
It is well-known that

\begin{Remark}\label{lem:linear}
Let $A_{1}$ and $A_{2}$ be two varieties on $\Omega_{1}$ and $\Omega_{2}$ respectively through $o$.
Assume that $A_{1}$ and $A_{2}$ are both regular at $o$.
Then $\dim (T_{A_{1},o}\cap T_{A_{2},o})=\dim(T_{A_{1}\times A_{2},(o,o)}\cap T_{\Delta,(o,o)})$.
\end{Remark}

Let $u$ and $v$ be plurisubharmonic functions near $o$.
In \cite{D-K01,GZjump-equ},
the following statement has been presented, which was used to prove Theorem \ref{thm:subadd_cse_general}.

\begin{Proposition}
\label{prop:add_prod_cse}\cite{D-K01,GZjump-equ}
$c_{z\times w}(\max\{\pi_{1}^{*}(u),\pi_{2}^{*}(v)\})=c_{z}(u)+c_{w}(v).$
\end{Proposition}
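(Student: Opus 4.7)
The plan is to prove the equality by establishing the two opposite inequalities separately.

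The easy direction, $c_{z\times w}(\max\{\pi_{1}^{*}u,\pi_{2}^{*}v\})\geq c_{z}(u)+c_{w}(v)$, combines a pointwise estimate with Fubini. From the trivial bound $(c_{1}+c_{2})\max\{a,b\}\geq c_{1}a+c_{2}b$ valid for $c_{1},c_{2}\geq 0$, I get, after exponentiation,
$$e^{-2(c_{1}+c_{2})\max\{\pi_{1}^{*}u,\pi_{2}^{*}v\}}\leq e^{-2c_{1}u(z)}\cdot e^{-2c_{2}v(w)}.$$
For $c_{1}<c_{z}(u)$ and $c_{2}<c_{w}(v)$, Fubini shows the factored right-hand side is locally integrable near $(z,w)$, hence so is the left-hand side. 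Taking suprema in $c_{1},c_{2}$ completes this direction.

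For the reverse inequality $c_{z\times w}(\max\{\pi_{1}^{*}u,\pi_{2}^{*}v\})\leq c_{z}(u)+c_{w}(v)$, the plan is to reduce matters to a one-dimensional asymptotic analysis via the layer-cake formula. Set $\lambda_{u}(t):=|\{z\in U:u(z)<t\}|$ and $\lambda_{v}(t):=|\{w\in V:v(w)<t\}|$. A direct computation gives
$$\int_{U\times V}e^{-2c\max\{u,v\}}\,dV \;=\; 2c\int_{-\infty}^{+\infty}e^{-2ct}\lambda_{u}(t)\lambda_{v}(t)\,dt,$$
alongside the one-variable analogues $\int_{U}e^{-2cu}\,dz=2c\int e^{-2ct}\lambda_{u}(t)\,dt$ and similarly for $v$. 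Equivalently, writing $G(t):=2c\int_{t}^{+\infty}e^{-2cs}\lambda_{u}(s)\,ds$, Fubini expresses the product integral as $\int G(t)\,d\lambda_{v}(t)$. The crucial input is then Berndtsson's openness property: the non-integrability of $e^{-2c_{z}(u)u}$ (equivalently, of $e^{-2c_{z}(u)s}\lambda_{u}(s)\,ds$ in $s$) pins down the blow-up rate of $G(t)$ as $t\to-\infty$; combining this with the corresponding statement for $\lambda_{v}$, a careful integration-by-parts argument forces the full product integral to diverge precisely when $c>c_{z}(u)+c_{w}(v)$.

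The main obstacle is the upper bound direction. Although the layer-cake reformulation cleanly separates the contributions of $u$ and $v$, the resulting one-dimensional question -- whether the critical exponent of the product measure $\lambda_{u}(t)\lambda_{v}(t)\,dt$ tested against $e^{-2ct}$ equals $c_{z}(u)+c_{w}(v)$ -- fails for arbitrary increasing functions and depends essentially on the plurisubharmonic structure, captured sharply via Berndtsson's solution of the openness conjecture. Handling this asymptotic analysis rigorously -- in particular controlling the boundary terms in the integration-by-parts step rather than arguing only with the heuristic $\lambda_{u}(s)\sim e^{2c_{z}(u)s}$ -- will be the technical heart of the proof.
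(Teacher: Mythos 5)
The paper does not prove Proposition~\ref{prop:add_prod_cse} itself; it is stated as a known result with a citation to \cite{D-K01,GZjump-equ}, so there is no internal proof to compare against. Judging the proposal on its own terms:

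Your $\geq$ direction (Fubini together with $e^{-2(c_{1}+c_{2})\max\{\phi,\psi\}}\leq e^{-2c_{1}\phi}e^{-2c_{2}\psi}$) is correct and standard.

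The $\leq$ direction, however, has a genuine gap. After the layer--cake reformulation, you are asking the following purely one--variable question: given bounded decreasing $A,B\colon[0,\infty)\to[0,\infty)$ with thresholds
$\alpha_{0}=\sup\{\alpha:\int_{0}^{\infty}A(s)e^{2\alpha s}\,ds<\infty\}$ and $\beta_{0}$ defined analogously, and knowing in addition (Berndtsson openness) that $\int A(s)e^{2\alpha_{0}s}\,ds=\infty$ and $\int B(s)e^{2\beta_{0}s}\,ds=\infty$, must the threshold for $\int A(s)B(s)e^{2cs}\,ds$ equal $\alpha_{0}+\beta_{0}$? The answer is \emph{no}. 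Writing $A=e^{-\mu}$, $B=e^{-\nu}$ with $\mu,\nu$ increasing, one has $\alpha_{0}=\tfrac12\liminf_{s\to\infty}\mu(s)/s$ and the product threshold is $\tfrac12\liminf(\mu(s)+\nu(s))/s$. Take $\mu$ equal to $3s$ away from ``dips'' in which $\mu$ stays constant on $[s_{m}^{*}/3,s_{m}^{*}]$ (so that $\mu(s_{m}^{*})/s_{m}^{*}=1$) and then climbs at slope $5$ back to $3s$, with $s_{m}^{*}=100^{2m}$; define $\nu$ the same way with dip centers $100^{2m+1}$. Then $\alpha_{0}=\beta_{0}=\tfrac12$, both openness conditions hold (each dip contributes a bounded--below amount to $\int e^{s-\mu(s)}\,ds$), yet the dip regions of $\mu$ and $\nu$ are disjoint so $\liminf(\mu+\nu)/s=4$ and the product threshold is $2>1=\alpha_{0}+\beta_{0}$. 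So monotonicity plus Berndtsson's single boundary datum on each factor cannot ``force the full product integral to diverge precisely when $c>c_{z}(u)+c_{w}(v)$.'' The inequality $c_{(z,w)}(\max)\leq c_{z}(u)+c_{w}(v)$ is true, but its proof must use plurisubharmonic structure that is strictly stronger than what the layer--cake decomposition together with openness records about $\lambda_{u}$ and $\lambda_{v}$: for radial weights this extra structure is a concavity property of $\log\lambda_{\phi}$, and in general one must appeal to the argument of Demailly--Koll\'ar (or, after openness/strong openness, to approximation by weights with analytic singularities and resolution) rather than to an integration--by--parts manipulation of the distribution functions alone. As it stands, the ``technical heart'' of your plan is an assertion, not a proof.
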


Let $c=c_{o}(u)+c_{o}(v)$.
Proposition \ref{prop:add_prod_cse} shows that $c_{(o,o)}(\max\{\pi_{1}^{*}(u),\pi_{2}^{*}(v)\})=c$.

\section{Proofs of main results and applications}\label{sec:proof_main}

In this section,
we present the proofs of main results and applications.

\subsection{Proof of Theorem \ref{thm:jump_equality_dim_sing}}

By Remark \ref{rem:GZ1005} $(u\sim c_{o}(u)u, J=\mathcal{I}(c_{o}(u)u)_{o})$,
it follows that
\begin{equation}
\label{equ:151013b}
c_{z}(\tilde{u})\leq 1
\end{equation}
for any $z\in (A,o)$
and $c_{o}(\tilde{u})=1$ $(\Leftarrow c_{o}(c_{o}(u)u)=1)$.
By Proposition \ref{prop:DK2000},
it follows that
\begin{equation}
\label{equ:151010a}
c_{z}(\tilde{u}|_{H})\leq 1
\end{equation}
for any $z\in (A\cap H,o)$.

Using Proposition \ref{prop:DK2000} and inequality \ref{equ:151010a},
one can obtain that $c_{o}(\tilde{u}|_{H})\leq c_{o}(\tilde{u})\leq 1$.
Combining with
$1=c_{o}(u)/c_{o}(u)=c_{o}(u|_{H})/c_{o}(u)=c_{o}(c_{o}(u)u|_{H})\leq c_{o}(\tilde{u}|_{H})$ $(\Leftarrow c_{o}(u)u\leq\tilde{u})$,
one can obtain that
\begin{equation}
\label{equ:151013a}
c_{o}(\tilde{u}|_{H})=c_{o}(\tilde{u})=1.
\end{equation}

Let $l=k-\dim_{o}(A\cap H)$.
Let $A_{3}$ be a irreducible component of
$A\cap H$ on $\mathbb{B}^{k-l}\times\mathbb{B}^{l}\subset H$ through $o$
satisfying $\dim_{o}A_{3}=k-l$.

By the parametrization of $(A_{3},o)$ in $H$ (see "Local parametrization theorem" (4.19) in \cite{demailly-book}),
it follows that
one can find local coordinates $(z_{1},\cdots,z_{n})$ of
a neighborhood $U=\mathbb{B}^{k-l}\times\mathbb{B}^{l}\times\mathbb{B}^{n-k}$ of $o$ satisfying $H=\{z_{k+1}=\cdots=z_{n}=0\}$
and $\dim(A\cap U)=\dim_{o}A$
such that

$(1)$ $A_{3}\cap((\mathbb{B}^{k-l}\times\mathbb{B}^{l})\cap H)$ is reduced and irreducible;

$(2)$ for any $a=(a_{1},\cdots,a_{k-l})\in\mathbb{B}^{k-l}$,
$A_{3}\cap L_{a}\neq\emptyset$, where $L_{a}=\{z_{1}=a_{1},\cdots,z_{k-l}=a_{k-l}\}$;

$(3)$ there exists analytic subset $A_{4}\subseteq\mathbb{B}^{k-l}$
such that any $z\in (A_{3}\cap p^{-1}(\mathbb{B}^{k-l}\setminus A_{4}))$
is the regular point in $A_{3}$ and the noncritical point of $p|_{A_{3,reg}}$,
where $p:(z_{1},\cdots,z_{k})=(z_{1},\cdots,z_{k-l})$.

By $(2)$, $(3)$, $c_{o}(\tilde{u})=c_{o}(\tilde{u}|_{H})=1$ (inequality \ref{equ:151013a}),
$c_{z}(\tilde{u})\leq1$ for any $z\in A_{3}$ (inequality \ref{equ:151013b}),
and Remark \ref{rem:lct_slice_sing},
it follows that for almost every
$a=(a_{1},\cdots,a_{k-l})\in\mathbb{B}^{k-l}$
with respect to the Lebesgue measure on $\mathbb{C}^{k-l}$,
there exists $z_{a}\in A_{3}\cap L_{a}$
such that
equality $c_{z_{a}}(\tilde{u}|_{L_{a}})=1$ (the set of $a$ denoted by $A_{ae}$).

Let $\tilde{L}_{a}=\{z_{1}=a_{1},\cdots,z_{k-l}=a_{k-l}\}$.
By inequality \ref{equ:151013b} and Proposition \ref{prop:DK2000},
it follows that for any $a\in A_{ae}$,
$1=c_{z_{a}}(\tilde{u}|_{L_{a}})\leq c_{z_{a}}(\tilde{u}|_{\tilde{L}_{a}})\leq c_{z_{a}}(\tilde{u})=1$,
which implies $c_{z_{a}}(\tilde{u}|_{L_{a}})=c_{z_{a}}(\tilde{u}|_{\tilde{L}_{a}})=1$.
Using
Theorem \ref{thm:jump_equality}
($\mathbb{C}^{n}\sim\tilde{L}_{a}$,
$H\sim H\cap\tilde{L}_{a}=L_{a}$,
$o\sim z_{a}$, $u\sim\tilde{u}|_{\tilde{L}_{a}}$),
one can obtain that
for any $a\in A_{ae}$,
$\max_{z_{a}\in p^{-1}(a)}\dim_{z_{a}}\{z|c_{z}(\tilde{u}|_{\tilde{L}_{a}})\leq 1\}\geq n-l-(k-l)=n-k$.
By the definition of $\tilde{u}$,
it follows that
$(A\cap U)\cap \tilde{L}_{a})\supseteq\{c_{o}(\tilde{u}|_{\tilde{L}_{a}})\leq 1\}$,
which implies
$\dim((A\cap U)\cap \tilde{L}_{a})\geq\max_{z_{a}\in p^{-1}(a)}\dim_{z_{a}}\{z|c_{z}(\tilde{u}|_{\tilde{L}_{a}})\leq 1\}$.
Then we obtain that
the $2(n-k)$-dimensional Hausdorff measure
of $\dim((A\cap U)\cap \tilde{L}_{a})$ is not zero for any $a\in A_{ae}$.

Note that the $2(k-l)$-dimensional Hausdorff measure of $A_{ae}$ is not zero,
then it follows that the $2(n-k)+2(k-l)=2(n-l)$-dimensional Hausdorff measure
of $A$ near $o$ is not zero (see Theorem 3.2.22 in \cite{Fed69}),
which implies that
$\dim_{o}A=\dim(A\cap U)\geq n-l$.
Note that $l=k-\dim_{o}(A\cap H)$ implies $\dim_{o}A\leq n-k+(k-l)=n-l$,
then Theorem \ref{thm:jump_equality_dim_sing} has been proved.

\subsection{Proof of Theorem \ref{thm:lct_slice_graph}}
By Corollary \ref{coro:jump_supp},
it follows that $(2)\Leftrightarrow (3)$.

In order to prove Theorem \ref{thm:lct_slice_graph},
by Theorem \ref{thm:jump_equality_dim_sing},
it suffices to prove the following statement $((1)\Rightarrow(2))$.\\

Assume that $(A\cap H,o)$ is regular,
and
$k-\dim_{o}A\cap H=n-\dim_{o}A$.
If $c_{o}(u)=c_{o}(u|_{H})$,
then there exist coordinates $(w_{1},\cdots,w_{k},z_{k+1},\cdots,z_{n})$ near $o$ and $l\in\{1,\cdots,k\}$,
such that $(w_{1}=\cdots=w_{l}=0,o)=(A,o).$\\

Let $J_{0}=\mathcal{I}(c_{o}(u)u)_{o}$.
By Remark \ref{rem:GZ1005} $(u\sim c_{o}(u)u)$,
it follows that there exists $p_{0}>0$ large enough,
such that $\tilde{u}:=\max\{c_{o}(u)u,p_{0}\log|J_{0}|\}$
satisfies:
$(1)$ $c_{o}(\tilde{u})=1$ $(\Leftarrow c_{o}(c_{o}(u)u)=1)$;
$(2)$ $(\{z|c_{z}(\tilde{u})\leq 1\},o)=(A,o)$.

By $\tilde{u}|_{H}\geq c_{o}(u)u|_{H}= c_{o}(u|_{H})u|_{H}$,
it follows that
$c_{o}(\tilde{u}|_{H})\geq c_{o}(c_{o}(u)u|_{H})=c_{o}(c_{o}(u|_{H})u|_{H})=1$.
Combining with the fact that
$c_{o}(\tilde{u}|_{H})\leq c_{o}(\tilde{u})=1$,
we obtain that
\begin{equation}
\label{equ:151009a}
c_{o}(\tilde{u}|_{H})=1.
\end{equation}

Note that $c_{z}(\tilde{u}|_{H})\leq c_{z}(\tilde{u})$ for any $z\in A\cap H$,
then by $(2)$ $(\Rightarrow c_{z}(\tilde{u})\leq 1)$ for any $z\in A\cap H$,
it follows that
$(\{z|c_{z}(\tilde{u}|_{H})\leq 1\},o)\supseteq(A\cap H,o).$
Combining with the definition of
$\tilde{u}$ $(\Rightarrow (\{z|c_{z}(\tilde{u}|_{H})<+\infty\},o)\subseteq(V(J_{o})\cap H,o)=(A\cap H,o))$,
we obtain
\begin{equation}
\label{equ:151009b}
(V(\mathcal{I}(\tilde{u}|_{H})),o)=(\{z|c_{z}(\tilde{u}|_{H})\leq 1\},o)=(A\cap H,o).
\end{equation}

In the following part of the present section,
we consider $\tilde{u}$ instead of $u$.

By equality \ref{equ:151009b}, it follows that $(V(\mathcal{I}(\tilde{u}|_{H})),o)(=(A\cap H,o))$ is regular.
Combining with equality \ref{equ:151009a} and Corollary \ref{coro:jump_supp} $(u\sim \tilde{u}|_{H}$ on $(H,o))$,
it follows that there exist $l\in\{1,\cdots,k\}$ and holomorphic functions $f_{1},\cdots,f_{l}$ near $o\in H$ such that \\
$(a)$ $df_{1}|_{o},\cdots,df_{l}|_{o}$ are linear independent;\\
$(b)$ $(\{f_{1}=\cdots=f_{l}=0\},o)=(A\cap H,o)$ holds;\\
$(c)$ $|f_{j}|^{2}e^{-2\tilde{u}|_{H}}$ are all locally integrable near $o$ for $j\in\{1,\cdots,l\}$.

By Remark \ref{rem:ot_plane} and $(c)$,
it follows that there exist holomorphic functions $F_{1},\cdots,F_{l}$ near $o\in \mathbb{C}^{n}$ such that
 and $|F_{j}|^{2}e^{-2\tilde{u}}$ are integrable near $o$ for any $j\in\{1,\cdots,l\}$,
which implies that $\{F_{1}=\cdots=F_{l}=0\}\supseteq A$.
Combining $F_{j}=f_{j}$ and $(a)$, we obtain that $dF_{1}|_{o},\cdots,dF_{l}|_{o},dz_{k+1}|_{o},\cdots,dz_{n}|_{o}$ are linear independent.

Note that $\{F_{1}=\cdots=F_{l}=0\}$ is regular near $o$ and $n-\dim_{o}A=k-\dim_{o}A\cap H=l$,
then it follows that $\{F_{1}=\cdots=F_{l}=0\}=A$ near $o$.
Choosing $w_{j}=F_{j}$ for any $j\in\{1,\cdots,l\}$,
one can find holomorphic functions $w_{l+1},\cdots,w_{k}$ near $o$ such that
$dw_{1}|_{o},\cdots,dw_{k}|_{o},dz_{k+1}|_{o},\cdots,dz_{n}|_{o}$ are linear independent.
Then Theorem \ref{thm:lct_slice_graph} has been proved.

\subsection{Proof of Proposition \ref{prop:add_dim_nonregular}}\label{sec:add_nonregular}

Let $A_{1}=V(\mathcal{I}(cu))$ and $A_{2}=V(\mathcal{I}(cv))$,
and $A=\{(z,w)|c_{(z,w)}(\max\{\pi_{1}^{*}(u),\pi_{2}^{*}(v)\})\leq c\}$.
By Proposition \ref{thm:subadd_cse_general},
it follows that
\begin{equation*}
\begin{split}
c_{(o,o)}(\max\{\pi_{1}^{*}(u),\pi_{2}^{*}(v)\})
&=c_{o}(u)+c_{o}(v)=c
\\&=c_{o}(\max\{u,v\})
=c_{(z,w)}(\max\{\pi_{1}^{*}(u),\pi_{2}^{*}(v)\}|_{\Delta}).
\end{split}
\end{equation*}
Using Theorem \ref{thm:jump_equality_dim_sing} $(u\sim\max\{\pi_{1}^{*}(u),\pi_{2}^{*}(v)\}$,
$H\sim\Delta$, $o\sim (o,o)$, $k\sim n$, $n\sim 2n)$,
we obtain $\dim_{(o,o)}A=\dim_{(o,o)}(A\cap\Delta)+n$.
By Proposition \ref{prop:add_prod_cse},
it follows that
$A=\{(z,w)|c_{z}(u)+c_{w}(v)\leq c\}\subseteq\{(z,w)|\max\{c_{z}(u),c_{w}(v)\}\leq c\}=A_{1}\times A_{2}$,
which implies  $\dim_{o}A_{1}+\dim_{o}A_{2}=\dim_{(o,o)}(A_{1}\times A_{2})\geq \dim_{(o,o)}A$.
Note that $B=\{z|c_{z}(u)+c_{z}(v)\leq c\}$ is biholomophic to $A\cap\Delta$,
then it follows that $\dim_{o}A_{1}+\dim_{o}A_{2}\geq \dim_{(o,o)}A=\dim_{(o,o)}(A\cap\Delta)+n=n+\dim_{o}B$.
Proposition \ref{prop:add_dim_nonregular} has thus been proved.

\subsection{Proof of Proposition \ref{prop:lct_add_graph}}
Following the symbols in subsection \ref{sec:add_nonregular},
by Theorem \ref{thm:lct_slice_graph} $(n\sim 2n$, $k\sim n$, $u\sim\max\{\pi_{1}^{*}u,\pi_{2}^{*}v\}$, $o\sim (o,o)\in\mathbb{C}^{n}\times\mathbb{C}^{n}$,
$H\sim\Delta$ the diagonal of $\mathbb{C}^{n}\times\mathbb{C}^{n})$,
it follows that $A$ is regular at $((o,o))$ satisfying $\dim_{(o,o)}A=\dim_{(o,o)}(A\cap\Delta)+n$.
As $A_{1}\cap A_{2}=B$,
it follows that
$(A_{1}\times A_{2})\cap\Delta=A\cap\Delta$,
which implies
\begin{equation}
\label{equ:151024a}
\dim_{(o,o)}A
=\dim_{(o,o)}(A\cap\Delta)+n
=\dim_{(o,o)}((A_{1}\times A_{2})\cap\Delta)+n.
\end{equation}

Note that $A_{1}\times A_{2}\supseteq A$ and equality \ref{equ:151024a} holds,
then it follows that
$\dim_{(o,o)}(A_{1}\times A_{2})\geq\dim_{(o,o)}A=\dim_{(o,o)}((A_{1}\times A_{2})\cap\Delta)+n$.
As $\Delta$ is regular,
then it is clear that
$\dim_{(o,o)}(A_{1}\times A_{2})\leq\dim_{(o,o)}((A_{1}\times A_{2})\cap\Delta)+n$,
which implies
$\dim_{(o,o)}(A_{1}\times A_{2})=\dim_{(o,o)}((A_{1}\times A_{2})\cap\Delta)+n=\dim_{(o,o)}A$.
Note that $(A,(o,o))$ is regular and $A_{1}\times A_{2}$ is irreducible at $(o,o)$ ($A_{1}$ and $A_{2}$ are both irreducible at $o$),
then we obtain $A=A_{1}\times A_{2}$,
which implies $A_{1}$ and $A_{2}$ are both regular.

By the transversality between $A_{1}\times A_{2}=A$ and $\Delta$ at $(o,o)$ and Remark \ref{lem:linear},
it follows that $2n=\dim(T_{A_{1}\times A_{2},(o,o)}+T_{\Delta,(o,o)})=\dim T_{A_{1}\times A_{2},(o,o)}+\dim T_{\Delta,(o,o)}-\dim(T_{A_{1}\times A_{2},(o,o)}\cap T_{\Delta,(o,o)})
=(\dim T_{A_{1},o}+\dim T_{A_{2},o})+n-\dim(T_{A_{1},o}\cap T_{A_{2},o})=\dim(T_{A_{1},o}+ T_{A_{2},o})+n$.
It is clear that $\dim(T_{A_{1},o}+T_{A_{2},o})=n$,
then we prove Proposition \ref{prop:lct_add_graph}.

\vspace{.1in} {\em Acknowledgements}.
The author addresses would like to sincerely thank my advisor, Professor Xiangyu Zhou,
for bringing me to the theory of multiplier ideal sheaves and for his
valuable help to me in all ways.

The author would also like to sincerely thank
Professor Jean-Pierre Demailly for giving series of
talks on related topics at CAS and PKU
and sharing his related recent work,
Professor Liyou Zhang for pointing out some typos,
and Zhenqian Li for helpful comments.

\bibliographystyle{references}
\bibliography{xbib}

\end{document}